\newtheorem{thm}{Theorem}[section]
\newtheorem{cor}[thm]{Corollary}
\newtheorem{prop}[thm]{Proposition}
\newtheorem{lem}[thm]{Lemma}
\theoremstyle{definition}
\newtheorem{defn}[thm]{Definition}
\newtheorem{exmp}[thm]{Example}
\newtheorem{prob}[thm]{Problem}
\theoremstyle{remark}
\newtheorem{rem}[thm]{Remark}
\let\c@equation\c@thm
\numberwithin{equation}{section}
\title[Equipartitions of  measures by hyperplanes]{Computational topology of \\ equipartitions   by hyperplanes }
\author{Rade T.\ \v Zivaljevi\' c}
\address{Mathematical Institute SASA, Knez Mihailova 36, 11001 Belgrade, Serbia}
\email{rade@turing.mi.sanu.ac.rs}
\thanks{Supported by the Serbian Ministry of Science and Technology, Grants 174020  and  174034.}
\date{February 5, 2014 (first version, April 10, 2011)}
\begin{document}

\begin{abstract}
We compute a primary cohomological obstruction to the existence of
an equipartition for $j$ mass distributions in $\mathbb{R}^d$ by
two hyperplanes in the case $2d-3j = 1$. The central new result is
that such an equipartition always exists if $d=6\cdot 2^k +2$ and
$j=4\cdot 2^k+1$ which for $k=0$ reduces to the main result of the
paper P.~Mani-Levitska et al., Topology and combinatorics of
partitions of masses by hyperplanes, {\em Adv.\ Math.} 207 (2006),
266--296. The theorem follows from a Borsuk-Ulam type result
claiming the non-existence of a $\mathbb{D}_8$-equivariant map $f
: S^{d}\times S^d\rightarrow S(W^{\oplus j})$ for an associated
real $\mathbb{D}_8$-module $W$. This is an example of a genuine
combinatorial geometric result which involves
$\mathbb{Z}/4$-torsion in an essential way and cannot be obtained
by the application of either Stiefel-Whitney classes or
cohomological index theories with $\mathbb{Z}/2$ or $\mathbb{Z}$
coefficients. The method opens a possibility of developing an
``effective primary obstruction theory'' based on $G$-manifold
complexes, with applications in geometric combinatorics, discrete
and computational geometry, and computational algebraic topology.
\end{abstract}

\maketitle

\section{Introduction}

\subsection{Computational topology}\label{sec:computational}

Algebraic topology as a tool ``useful for solving discrete
geometric problems of relevance to computing and the analysis of
algorithms'' was in \cite{Z04} isolated as one of important themes
of the emerging `applied and computational topology'. Since the
appearance of the review \cite{Z04}, the field as a whole has
undergone a rapid development, the progress has been made in many
open problems, and the {\em configuration space/test map} paradigm
\cite[Section~14.1]{Z04} has maintained its status as one of
central schemes for applying equivariant topological methods to
combinatorics and discrete geometry.

\medskip
Our objective is twofold: (a) to report a progress
(Theorems~\ref{thm:main-1} and \ref{thm:main-2}) on the
`equipartitions of masses by hyperplanes' problem, which has been
after \cite{Ram} one of benchmark problems for applying
topological methods in computational geometry, (b) to explore a
new scheme for developing `effective primary obstruction theory'
based on $G$-manifold complexes (see
Section~\ref{sec:app-comput-top} in Appendix II for an outline).

\subsection{The equipartition problem}\label{subsec:intro-1}

An old problem in combinatorial geometry is to determine when $j$
measurable sets in $\mathbb{R}^d$ admit an {\em equipartition} by
a collection of $h$ hyperplanes; if this is possible we say that
the triple $(d,j,h)$ is admissible. In this generality the problem
was formulated by Gr\"{u}nbaum in \cite{Gru} but the problem
clearly stems from the classical results of Banach and Steinhaus
\cite{Ste} and Stone and Tukey \cite{Sto-Tuk} on the ``ham
sandwich theorem''.

Among the highlights of the theory of hyperplane partitions of
measurable sets (measures) are Hadwiger's equipartition of a
single mass distribution in $\mathbb{R}^3$ by three hyperplanes
\cite{Hadw}, results of Ramos \cite{Ram} who proved that
$(5,1,4),(9,3,3), (9,5,2)$ are admissible triples, and a result of
Mani-Levitska et al.\ \cite{MVZ} who established a
$2$-equipartition for $5$ measures in $\mathbb{R}^8$. More recent
is the proof of the existence of a $4$-equipartition in
$\mathbb{R}^4$ for measures that admit some additional symmetries
(\v Zivaljevi\' c \cite{Z08}) and a result of Matschke \cite{Mat}
describing a general reduction procedure for admissible triples.

For an account of known results, history of the problem, and an
exposition of equivariant topological methods used for its
solution, the reader is referred to \cite{MVZ}. The landmark paper
of E.~Ramos \cite{Ram} is a valuable source of information and a
link with earlier results on the discrete and computational
geometry of half-space range queries (D.~Dobkin, H.~Edelsbrunner,
M.~Paterson, A.~Yao, F.~Yao). The chapter ``Topological Methods''
in CRC Handbook of Discrete and Computational Geometry \cite{Z04},
provides an overview of the general {\em configuration space/test
map}-scheme for applying equivariant topological methods on
problems of geometric combinatorics and discrete and computational
geometry.

\begin{defn}
\label{defn:equi} Suppose that ${\mathcal M} = \{\mu_1, \mu_2,
\ldots , \mu_j\}$ is a collection of $j$ {\em continuous mass
distributions} (measures) defined in $\mathbb{R}^d$, meaning that
each $\mu_j$ is a finite, positive, $\sigma$-additive Borel
measure, absolutely continuous with respect to Lebesgue measure.
If ${\mathcal H} = \{H_i\}_{i=1}^k$ is a collection of $k$
hyperplanes in $\mathbb{R}^d$ in general position, the connected
components of the complement $\mathbb{R}^d\setminus\cup~{\mathcal
H}$ are called (open) $k$-orthants. The definition can be clearly
extended to the case of degenerate collections ${\mathcal H}$,
when some of the $k$-orthants are allowed to be empty. A
collection ${\mathcal H}$ is an {\em equipartition}, or more
precisely a $k$-equipartition for ${\mathcal M}$ if $$\mu_i(O) =
\mu_i(\overline{O\mathstrut}) =
{\frac{1}{2^k}}\mu_i(\mathbb{R}^d)$$ for each of the measures
$\mu_i\in {\mathcal M}$ and for each $k$-orthant $O$ associated to
${\mathcal H}$.
\end{defn}
A triple $(d,j,k)$ is called {\em admissible} if for each
collection $\mathcal{M}$ of $j$ continuous measures on
$\mathbb{R}^d$ there exists an equipartition of $\mathcal{M}$ by
$k$ hyperplanes. It is not difficult to see that if $(d,j,k)$ is
admissible that $(d+1,j,k)$ is also admissible\footnote{More
general reduction results for admissible triples can be found in
\cite{Mat} and \cite{Ram}.} which motivates the following general
problem.

\begin{prob}\label{prob:general}
The general measure equipartition problem is to determine or
estimate the function
$$\Delta(j,k) := \mbox{\rm min}\{d \mid (d,j,k)\, \mbox{ is admissible} \}$$
or equivalently to find the minimum dimension $d$ such that for
each collection $\mathcal{M}$ of $j$ continuous measures on
$\mathbb{R}^d$, there exists an equipartition of $\mathcal{M}$ by
$k$ hyperplanes.
\end{prob}

\section{New results}\label{sec:new}

Theorem~\ref{thm:main-1} is our central new result about
equipartitions of masses by two hyperplanes. For $k=0$ it reduces
to the result that the triple $(8,5,2)$ is admissible, which is
the main result of \cite{MVZ}. The reader is referred to
references \cite{Ram,MVZ,Mat} for an analysis explaining the
special role of equipartition problems associated to the triples
of the form $(d,j,2)$ where $2d-3j=1$. We emphasize that the
admissibility of all triples listed in Theorem~\ref{thm:main-1}
cannot be established either by the parity count results of Ramos
\cite{Ram}, or by the use of both Stiefel-Whitney characteristic
classes and the ideal-valued cohomology index theory with
$\mathbb{Z}/2$-coefficients.

\begin{thm}\label{thm:main-1} Each collection of $j=4\cdot 2^k+1$ measures in
$\mathbb{R}^d$ where $d=6\cdot 2^k +2$ admits an equipartition by
two hyperplanes. In light of the lower bound $\Delta(j,2)\geq
3j/2$, established by Ramos in \cite{Ram}, this result implies
that for each integer $k\geq 0$,
\begin{equation}\label{eqn:main-1}
\Delta(4\cdot 2^k+1,2)=6\cdot 2^k +2.
\end{equation}
\end{thm}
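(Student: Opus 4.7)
The plan is the classical configuration space/test map paradigm. Encode an ordered pair of oriented affine hyperplanes in $\mathbb{R}^d$ by a point of the configuration space $X := S^d \times S^d$; the dihedral group $\mathbb{D}_8$ acts on $X$ by swapping the two factors and by antipodal maps on each factor, matching the natural $\mathbb{Z}/2 \wr \mathbb{Z}/2$-symmetry of the labelled $4$-element set of orthants. Let $W \cong \mathbb{R}^3$ be the $\mathbb{D}_8$-module of zero-sum real functions on this orthant set. For every collection $\mathcal{M} = (\mu_1, \ldots, \mu_j)$ of measures, the $\mathbb{D}_8$-equivariant test map $T_{\mathcal{M}} \colon X \to W^{\oplus j}$ recording the normalized orthant-masses of each $\mu_i$ has zeros precisely at the equipartitions of $\mathcal{M}$. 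If no equipartition existed, $T_{\mathcal{M}}/\|T_{\mathcal{M}}\|$ would define a $\mathbb{D}_8$-equivariant map $X \to S(W^{\oplus j})$, so Theorem~\ref{thm:main-1} reduces to the Borsuk-Ulam type non-existence statement announced in the abstract: for $d = 6\cdot 2^k + 2$ and $j = 4\cdot 2^k + 1$ no such map exists.

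To prove this non-existence, I set up equivariant primary obstruction theory. The source has dimension $2d = 3j + 1$, while $S(W^{\oplus j})$ has dimension $3j-1$ and is $(3j-2)$-connected. Consequently an equivariant map on the $(3j-1)$-skeleton of any reasonable $\mathbb{D}_8$-CW structure on $X$ exists automatically, and the primary obstruction to extending one sits in
\[
H^{3j}_{\mathbb{D}_8}(X;\, \mathcal{Z}), \qquad \mathcal{Z} = \pi_{3j-1}(S(W^{\oplus j})) \cong \mathbb{Z},
\]
with $\mathcal{Z}$ carrying the orientation twist induced by $W^{\oplus j}$. Since $\dim X = 3j+1$, the only remaining obstruction is secondary; a dimension and connectivity argument shows that if the primary class is a non-trivial torsion class of the appropriate type, no secondary differential can annihilate it.

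The core of the argument is the explicit computation of this obstruction. Neither mod-$2$ Stiefel-Whitney classes nor the $\mathbb{Z}$-valued ideal-index theories detect it, so the computation has to be carried out at the chain level with integral coefficients and refined to extract a surviving $\mathbb{Z}/4$-torsion component. The strategy is to equip $X$ with a $\mathbb{D}_8$-CW decomposition compatible with the isotropy stratification of the action, following the ``$G$-manifold complex'' framework of Section~\ref{sec:app-comput-top}; compute the cellular equivariant cochain complex $C^*_{\mathbb{D}_8}(X; \mathcal{Z})$; identify a canonical cocycle representing the primary obstruction (independent of $\mathcal{M}$, by the naturality of the construction); and detect it via pairing with an appropriate class in $H^*(B\mathbb{D}_8)$ or, equivalently, by evaluation on an equivariant fundamental cycle of a top-dimensional orbit stratum. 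The main obstacle is precisely this final detection: keeping track of orientation twists, of cup products in $H^*(\mathbb{D}_8; \mathbb{Z})$, and of transfer maps from the three index-two subgroups of $\mathbb{D}_8$, in such a way that the purely $\mathbb{Z}/4$-part of the obstruction is seen to survive — this is exactly the place where $\mathbb{Z}/4$-torsion enters essentially and where the previously available methods break down.

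Granting the non-existence of an equivariant map, an equipartition exists for every $(d,j) = (6\cdot 2^k + 2,\, 4\cdot 2^k + 1)$, hence $\Delta(4\cdot 2^k + 1, 2) \leq 6\cdot 2^k + 2$. Combined with Ramos' lower bound $\Delta(j, 2) \geq 3j/2$ from \cite{Ram}, which yields $\Delta(4\cdot 2^k + 1, 2) \geq \lceil (12\cdot 2^k + 3)/2 \rceil = 6\cdot 2^k + 2$, the equality~\eqref{eqn:main-1} follows.
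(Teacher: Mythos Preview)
Your reduction to Theorem~\ref{thm:main-2} via the configuration space/test map scheme is correct and matches the paper exactly, as does the identification of the primary obstruction group $\mathcal{H}^{2d-1}_{\mathbb{D}_8}(S^d\times S^d;\mathcal{Z})$ and the final comparison with Ramos' lower bound. But the proposal stops precisely where the actual work begins. You yourself flag ``the main obstacle is precisely this final detection'' and then describe it only in generic terms (cup products in $H^*(B\mathbb{D}_8)$, transfers from index-two subgroups). That is not a proof sketch; it is a restatement of the problem. The paper's explicit point is that the $\mathbb{D}_8$-index theory with either $\mathbb{Z}/2$ or $\mathbb{Z}$ coefficients---which is what ``pairing with a class in $H^*(B\mathbb{D}_8)$'' amounts to---\emph{fails} to detect this obstruction, so the route you gesture at is the one already known not to work.

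What the paper actually does is different and concrete. It builds an explicit admissible filtration of $S^d\times S^d$ using ``$\mathbb{D}_8$-screens'' (Section~\ref{sec:standard}), writes down the top three terms of the resulting $\Lambda$-chain complex (Section~\ref{sec:fragment}), and computes the obstruction group directly as $\mathbb{Z}/4$ (Proposition~\ref{prop:spectral}). The evaluation of the obstruction class itself is then reduced, via the cocycle formula~(\ref{eqn:cocycle-formula}), to the mapping degree of a single $\mathbb{D}_8$-equivariant map $S^{d-1}\times S^{d-1}\to S(W^{\oplus j})$. The key idea---entirely absent from your proposal---is to realize such a map through the \emph{multiplication of real polynomials}, whose degree is computed via resultants and counts of factorizations as $2\binom{2k-1}{k-1}\pmod 4$ (Propositions~\ref{prop:degree-multiplication}--\ref{prop:obstruction}). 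The arithmetic of this binomial coefficient is what singles out $k=2^l$.

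A minor point: your remark that ``no secondary differential can annihilate'' the primary class is confused. If the primary obstruction is nonzero the map already fails to extend over the $(2d-1)$-skeleton, and there is nothing secondary to discuss; no further argument is needed there.
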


\medskip
Theorem~\ref{thm:main-1} is deduced from the following Borsuk-Ulam
type result about maps equivariant with respect to the dihedral
group actions.

\begin{thm} \label{thm:main-2}
There does not exist a $\mathbb{D}_8$-equivariant map
\begin{equation}\label{eqn:main-2}
f : S^d\times S^d \rightarrow S(W^{\oplus j})
\end{equation}
where $\mathbb{D}_8$ is the dihedral group of order eight,
$d=6\cdot 2^k +2, j=4\cdot 2^k+1$ for some integer $k\geq 0$, and
$W$ is the representation space of the real $3$-dimensional
$\mathbb{D}_8$-representation described in
Section~\ref{sec:notation}.

Moreover, a first obstruction to the existence of
(\ref{eqn:main-2}) lies in the (special) equivariant cohomology
group $\mathcal{H}^{2d-1}_{\mathbb{D}_8}(S^d\times S^d,
\mathcal{Z})\cong \mathbb{Z}/4$, described in
Section~\ref{sec:appendix} (see
Definition~\ref{defn:adm-filtration} and
Remark~\ref{rem:oznaka-cohom-gp}) and evaluated in
Sections~\ref{sec:fragment} and \ref{sec:evaluation}, where
$\mathcal{Z} = H_{2d-2}(S(W^{\oplus j}); \mathbb{Z})$ and
$2d-3j=1$. The obstruction vanishes unless
$$
d=6\cdot 2^k +2 \quad \mbox{\rm and}\quad j=4\cdot 2^k+1
$$
when it turns out to be equal to $2X$ where $X$ is a generator of
the group $\mathbb{Z}/4$.
\end{thm}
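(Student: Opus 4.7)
The plan is to apply $\mathbb{D}_8$-equivariant primary obstruction theory to the pair $X = S^d \times S^d$ and $Y = S(W^{\oplus j})$. The key numerical input is the identity $2d - 3j = 1$, which forces $\dim Y = 3j - 1 = 2d - 2$, so $X$ exceeds $Y$ in dimension by exactly one. Since $Y$ is $(2d-3)$-connected and $\mathbb{D}_8$ acts freely away from a subcomplex of codimension at least two, one can construct an equivariant map $g$ on the $(2d-2)$-skeleton of $X$ by standard cell-by-cell extension along free $\mathbb{D}_8$-orbits. The obstruction to extending $g$ across the $(2d-1)$-cells is a single primary class $\gamma(g) \in \mathcal{H}^{2d-1}_{\mathbb{D}_8}(X, \mathcal{Z})$, with $\mathcal{Z} = H_{2d-2}(Y;\mathbb{Z}) \cong \mathbb{Z}$ carrying the orientation $\mathbb{D}_8$-action coming from the representation $W^{\oplus j}$. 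Theorem~\ref{thm:main-2} will follow once $\gamma(g)$ is shown to be nonzero.

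The first task is to identify $\mathcal{H}^{2d-1}_{\mathbb{D}_8}(X, \mathcal{Z})$ as $\mathbb{Z}/4$. I would fix an explicit $\mathbb{D}_8$-equivariant CW structure on $X$ --- the natural product of minimal $\mathbb{Z}/2$-decompositions of the two $S^d$ factors, subdivided diagonally so that the swap is cellular --- and then compute the resulting equivariant cellular cochain complex with coefficients twisted by $\mathcal{Z}$. This is the \emph{fragment} calculation advertised in the statement. A careful bookkeeping of the $\mathbb{D}_8$-module structure on the top two chain groups, compared with the twist determined by $W$, should produce a cyclic group of order exactly $4$ in degree $2d-1$. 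The same cellular analysis should confirm that for any other admissible $(d,j)$ with $2d-3j=1$, either the group vanishes or the obstruction cocycle is automatically a coboundary; this yields the dichotomy in the second half of the theorem.

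The hard part will be the \emph{evaluation} step: verifying that for a natural choice of partial equivariant map $g$ on $X^{(2d-2)}$, the class $\gamma(g)$ equals precisely $2X$ for a generator $X$ of $\mathbb{Z}/4$. I would build $g$ from a canonical linear test-map associated to a generic $j$-tuple of functionals, composed with the equivariant projection onto $W^{\oplus j}$, and then read off the obstruction cocycle on a representative top $\mathbb{D}_8$-cell by a direct degree computation. The subtle feature is that the resulting integer must be nonzero yet even: $2X \neq 0$ in $\mathbb{Z}/4$ but $2X \equiv 0 \pmod{2}$. This is exactly the $\mathbb{Z}/4$-torsion phenomenon highlighted in the abstract and is the reason why neither Stiefel--Whitney classes nor the Fadell--Husseini $\mathbb{Z}/2$-index can see the obstruction. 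Once $\gamma(g) = 2X$ is established, equivariant obstruction theory immediately rules out the existence of an equivariant extension to all of $X$, completing the proof.
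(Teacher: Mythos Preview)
Your outline matches the paper's strategy at the architectural level: set up primary equivariant obstruction theory on a suitable filtration of $S^d\times S^d$, identify the obstruction group as $\mathbb{Z}/4$, and then evaluate the obstruction cocycle as a degree. However, the evaluation step in your proposal is where the real content lies, and your description there is not yet a proof.

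Concretely, the paper does \emph{not} evaluate the obstruction by pushing forward a ``canonical linear test-map associated to a generic $j$-tuple of functionals'' and reading off a degree on a top cell. Instead it constructs an explicit $\mathbb{D}_8$-equivariant map $\Phi:\mathbb{R}^d\times\mathbb{R}^d\to W^{\oplus j}$ by splitting $W^{\oplus j}\cong U^{\oplus j}\oplus\lambda^{\oplus j}$ and $\mathbb{R}^d\times\mathbb{R}^d\cong U^{\oplus j}\oplus U^{\oplus(d-j)}$, taking the identity on the $U^{\oplus j}$ factor, and using the \emph{multiplication of real polynomials} $\mathcal{P}_m\times\mathcal{P}_m\to\mathcal{P}_{2m}$ (with $m=d-j-1=2k$) on the remaining piece. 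The degree of this polynomial-multiplication map is computed via the resultant: the Jacobian of $(p,q)\mapsto pq$ at a factorization is precisely $\mathcal{R}(p,q)$, and for a regular value with all roots non-real the resultant is always positive, yielding ${\rm deg}(\mu^0_{m,m})=\binom{2k}{k}$ and hence an obstruction $\theta\equiv 2\binom{2k-1}{k-1}\pmod 4$. This binomial is odd exactly when $k$ is a power of $2$, which is what singles out $d=6\cdot 2^l+2$, $j=4\cdot 2^l+1$. A supporting ingredient you also omit is the congruence (Theorem in Appendix~I) that any two $\mathbb{D}_8$-equivariant maps $S^{d-1}\times S^{d-1}\to S(W^{\oplus j})$ have degrees agreeing mod $8$; this is what justifies computing the degree of one convenient map.

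Two smaller corrections. First, the paper's filtration is not a product CW structure with diagonal subdivision but an ``admissible filtration'' built from $\mathbb{D}_8$-screens (iterated projections to squares); the resulting top-of-chain complex is what produces the clean $\mathbb{Z}/4$ in Proposition~\ref{prop:spectral}. Second, your account of the dichotomy is not quite right: the obstruction group is $\mathbb{Z}/4$ for \emph{every} even $d$ with $2d-3j=1$; what distinguishes the special $(d,j)$ is not a structural collapse of the group but the parity of $\binom{2k-1}{k-1}$ in the formula for $\theta$.
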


\medskip\noindent
The reader is referred to Section~\ref{sec:summary} for an outline
and overall strategy of proofs of Theorems~\ref{thm:main-1} and
\ref{thm:main-2}. A broader perspective on the method applied in
the proof of Theorem~\ref{thm:main-2}, emphasizing its relevance
for computational obstruction theory, is offered in
Section~\ref{sec:appendix} (Subsection~\ref{sec:app-comput-top}).

\medskip\noindent
{\bf Caveat:}  We emphasize that the special equivariant
cohomology groups $\mathcal{H}^{\ast}_{G}(X; M)$, used in
Theorem~\ref{thm:main-2}, are in general different from the usual
equivariant cohomology groups of a $G$-$CW$ complex $X$ (as
described in \cite{tDieck87}). Nevertheless in many cases they are
easier to compute and still may contain non-zero classes which can
detect a non-trivial obstruction for the existence of an
equivariant map. For their definition and main properties the
reader is referred to Section~\ref{sec:appendix}.

\begin{rem}
The fact that the obstruction $2X\in \mathbb{Z}/4$ is divisible by
$2$ is precisely the reason why Theorem~\ref{thm:main-2} is not
accessible by the methods based on $\mathbb{Z}/2$-coefficients
(parity count \cite{Ram}, Stiefel-Whitney classes,
$\mathbb{D}_8$-equivariant index theory with
$\mathbb{Z}/2$-coefficients). As the elaborate spectral sequences
calculations \cite{Bla-Zie} demonstrate, the methods of
$\mathbb{D}_8$-equivariant index theory with
$\mathbb{Z}$-coefficients are not sufficient either. This may be
somewhat accidental since, as shown in \cite{Go-Lan} and
\cite{Do-Go-Lan}, the $\mathbb{Z}/4$-torsion is often present in
related cohomology calculations.
\end{rem}

\begin{rem}
In light of the reduction procedure of Matschke \cite{Mat}, who
proved the inequality $\Delta(j,k)\leq \Delta(j+1,k)-1$, it is
interesting to test if Theorem~\ref{thm:main-1} generates some new
admissible triples aside from those implied by the inequality
(\ref{eqn:main-1}). The answer is no, since the inequality
$\Delta(2^{k+1},2)\leq 3\cdot 2^k$ was established already by
Ramos in \cite{Ram}.
\end{rem}

\begin{rem} In the cases when (by Theorem~\ref{thm:main-2}) the
first obstruction vanishes, it is still possible that the
secondary obstruction for the existence of the map
(\ref{eqn:main-2}) is non-zero. Both the calculation of the
secondary obstruction and the detection of new admissible triples
$(d,j,h)$ is an interesting open problem.
\end{rem}

\section{Related results and background information}
\label{sec:old}

Results about partitions of measures by hyperplanes have found
numerous applications. The survey \cite{Z04}, which appeared in
2004, is a good source of information about the results obtained
before its publication. However new applications have emerged in
the meantime so we include a brief review of some of the most
interesting recent developments illustrating the relevance and
importance of hyperplane partitions for different areas of
mathematics.

\subsection{Polyhedral partitions of measures}

Equipartitions of measures by $k$-orth\-ants
(Definition~\ref{defn:equi}) are a special case of equipartitions
into polyhedral regions.

\medskip
Very interesting polyhedral partitions are introduced by Gromov in
\cite{Grom}. His {\em spaces of partitions}\/
\cite[Section~5]{Grom} are defined as the configuration spaces of
labelled binary trees $T_d$ of height $d$, with $2^d-1$ internal
nodes $N_d$ and $2^d$ external nodes $L_d$ (leaves of the tree
$T_d$). More explicitly a labelled binary tree $(T_d,
\{H_\nu\}_{\nu\in N_d})$ has an oriented hyperplane $H_\nu$
associated to each of the internal nodes $\nu\in N_d$ of $T_d$.
The left (respectively right) outgoing edge, emanating from
$\nu\in N_d$ is associated the positive half-space $H^+_\nu$
(respectively the negative half-space $H^-_\nu$) determined by
$H_\nu$.

Each of the leaves $\lambda\in L_d$ is the end point of the unique
maximal path $\pi_\lambda$ in the tree $T_d$. Each of the maximal
paths $\pi_\lambda$ is associated a polyhedral region $Q_\lambda$
defined as the intersection of all half-spaces associated to edges
of the path $\pi_\lambda$. The associated partition
$\{Q_\lambda\}_{\lambda\in L_d}$ depends continuously on the
chosen labels (hyperplanes) and defines an element of the
associated `space of partitions'.

These and related configuration spaces were used in \cite{Grom}
for the proof a general Borsuk-Ulam type theorem
($c_{\bullet}$-Corollary 5.3 on page 188) and utilized by Gromov
for his proof of the {\em Waist of the Sphere Theorem}.

\medskip
Very interesting `Voronoi polyhedral partitions' of measures were
recently introduced in \cite{Hub-Aro}. Far reaching results about
polyhedral equipartitions of measures were along these lines
obtained by Soberon \cite{Soberon}, Karasev \cite{Karasev}, and
Aronov and Hubard \cite{Hub-Aro}.

\subsection{Polynomial measure partitions theorems}

Theorem~\ref{thm:main-1}, being a relative of the Ham Sandwich
Theorem, has some standard consequences and extensions. One of
them is the Polynomial Ham Sandwich Theorem \cite{Sto-Tuk}, which
has recently found striking applications to some old problems of
discrete and computational geometry, see \cite{Gu-Ka},
\cite{So-Tao} and the references in these papers. These
breakthroughs have generated a lot of interest and enthusiasm, see
the reviews of J.~Matou\v{s}ek \cite{Jiri}, M.~Sharir
\cite{Sharir} and T.~Tao \cite{Tao-blog}.

\medskip Theorem \ref{thm:main-1}, as
well as other results about hyperplane measure partitions, have
immediate polynomial versions. They can be obtained by the usual
Veronese map $\mathbb{R}^d \hookrightarrow \mathbb{R}^D$, or some
of its variations. It remains to be seen if some of these
polynomial measure partition results can be used as a natural tool
which can replace the standard polynomial ham sandwich theorem in
some applications.

\section{Preliminaries, definitions, notation}

\subsection{Manifold complexes} \label{sec:methods}

\begin{defn}\label{def:nice_spaces}
A space $X$ is called a {\em manifold pre-complex} if it is either
a compact topological manifold (with or without boundary)  or if
it is obtained by gluing a compact topological manifold with
boundary to a manifold pre-complex via a continuous map of the
boundary.
\end{defn}

This definition appeared in \cite{Kreck} (Chapter~9) where
manifold pre-complexes are referred to as {\em nice spaces}. One
can {\em mutatis mutandis} modify the
Definition~\ref{def:nice_spaces} by allowing different kinds of
``manifolds''. For example $X$ is a pseudomanifold pre-complex
(orientable manifold pre-complex, complex manifold pre-complex,
etc.), if the constituent ``manifolds'' are pseudomanifolds
(orientable manifolds, complex manifolds, etc.).

\medskip
A manifold pre-complex should be seen as a straightforward
generalization of a (finite) $CW$-complex. However a $CW$-complex
has a natural filtration (and an associated rank function) so for
this reason we slightly modify the definition and introduce {\em
manifold complexes}.

\begin{defn}\label{def:strict_manifold_complex}
A space $X$ with a finite filtration
\begin{equation}\label{eqn:filtration-complex}
X_0\subset X_1\subset \ldots \subset X_{n-1}\subset X_n
\end{equation}
is called a {\em manifold complex} (orientable manifold complex,
etc.) if
\begin{itemize}
 \item[(1)] $X_0$ is a finite set of points ($0$-dimensional
manifold);
 \item[(2)]  For each $k \leq n,$ $X_k = X_{k-1}\cup_\phi Y_k$ where
             $Y_k$ is a compact $k$-dimensional manifold with
             boundary and $\phi : \partial Y_k \rightarrow
             X_{k-1}$ is a continuous map.
\end{itemize}
\end{defn}

As a variation on a theme we introduce manifold complexes with an
action of a finite group $G$.

\begin{defn}\label{def:G-manifold-complex} Let $G$ be a finite group.
A $G$-space $X$ which is also a manifold complex in the sense of
Definition~\ref{def:strict_manifold_complex} is called a
$G$-manifold complex if  $G$ preserves the filtration
(\ref{eqn:filtration-complex}).
\end{defn}

\subsection{Dihedral group $\mathbb{D}_8$}
\label{sec:notation}

For basic notation and standard facts about group actions the
reader is referred to \cite{tDieck87}. A representation space $V$
for a given $G$-representation $\rho : G\rightarrow GL(V)$ is also
referred to as a (real or complex) $G$-module. $S(V)$ is the unit
sphere in an orthogonal (or unitary) $G$-representation space $V$.
$X\ast Y$ is the join of two spaces $X$ and $Y$, and a standard
fact is that if $U$ and $V$ are two orthogonal $G$-modules,
$S(U\oplus V)$ and $S(U)\ast S(V)$ are isomorphic as $G$-spaces.

\medskip
$\mathbb{D}_8$ is the dihedral group of order $8$. $\Lambda =
\mathbb{Z}[\mathbb{D}_8]$ is the integral group ring of
$\mathbb{D}_8$ and $\Lambda x$ denotes the free, one-dimensional
$\Lambda$-module generated by $x$. In this paper $x$ is often a
fundamental class of an orientable $\mathbb{D}_8$-pseudomanifold
with boundary.

As the group of symmetries of the square $Q = \{(x,y)\in
\mathbb{R}^2 \mid 0\leq \vert x\vert, \vert y\vert \leq 1 \}$ the
dihedral group $\mathbb{D}_8$ has three distinguished involutions
$\alpha, \beta$ and $\gamma$ where
 \begin{equation}\label{eqn:action}
 \alpha(x,y)=(-x,y), \quad \beta(x,y)=(x,-y), \quad
 \gamma(x,y)=(y,x).
 \end{equation}
A standard presentation of $\mathbb{D}_8$ is,
$$\mathbb{D}_8 = \langle \alpha, \beta, \gamma \mid \alpha^2 =
\beta^2 = \gamma^2 = 1 ,\, \alpha\beta = \beta\alpha,\,
\alpha\gamma = \gamma\beta,\, \beta\gamma = \gamma\alpha
\rangle.$$

The real two-dimensional representation $ \rho : \mathbb{D}_8
\rightarrow O(2)$ arising from the action on the square is denoted
by $U$. Let $W := U\oplus\lambda$ where $\lambda$ is the
one-dimensional (real) $\mathbb{D}_8$-representation, such that
\begin{equation}\label{eqn:one-dim-rep}
\alpha \cdot z = - z, \quad \beta\cdot z = -z, \quad \gamma\cdot z
= z.
\end{equation}
Interpreting $\mathbb{D}_8$ as a Sylow $2$-subgroup of the
symmetric group $S_4$ we see that the $\mathbb{D}_8$-module $W$ is
isomorphic to the restriction of the reduced permutation
representation of $S_4$ to the dihedral group.

\begin{figure}[hbt]
\centering
\includegraphics[scale=0.55]{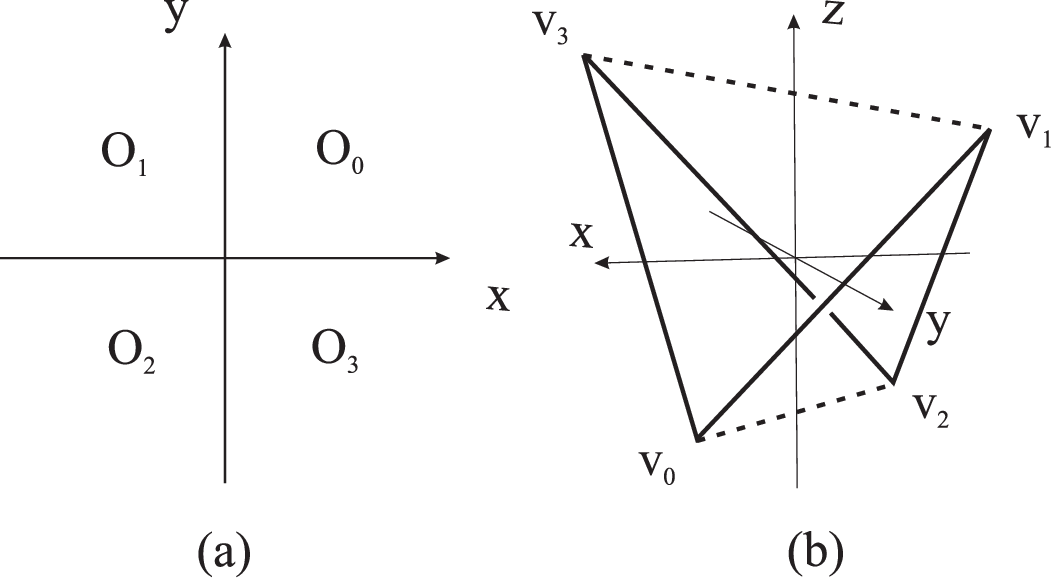}
\caption{$\mathbb{D}_8$-module $W$ as a permutation
representation.} \label{fig:tetra}
\end{figure}

More explicitly, as shown in Figure~\ref{fig:tetra}, the
permutations associated to the basic involutions $\alpha, \beta$
and $\gamma$ are:
\begin{equation}\label{eqn:permutations}
\alpha = \left (
\begin{array}{cccc}
0 & 1 & 2 & 3 \\
1 & 0 & 3 & 2
\end{array}
\right), \quad  \beta = \left (
\begin{array}{cccc}
0 & 1 & 2 & 3 \\
3 & 2 & 1 & 0
\end{array}
\right), \quad \gamma = \left (
\begin{array}{cccc}
0 & 1 & 2 & 3 \\
0 & 3 & 2 & 1
\end{array}
\right)
\end{equation}

\section{The topology of the equipartition
problem}\label{sec:topology}

The problem of deciding if for a given collection $\{\mu_1,\ldots,
\mu_j\}$ of measures in $\mathbb{R}^d$ there exists an
equipartition by an ordered pair $(H_0,H_1)$ of (oriented)
hyperplanes, can be reduced to a topological problem via the usual
{\em Configuration Space/Test Map Scheme}, see
\cite[Section~14.1]{Z04} or \cite[Section~2.3]{MVZ}. Here is a
brief outline of this construction.

The configuration space, or the space of all candidates for the
equipartition, is the space of all ordered pairs $(H_0,H_1)$ of
oriented hyperplanes. After a suitable compactification this space
can be identified as $S^d\times S^d$. The natural group of
symmetries for the equipartition problem is the dihedral group
$\mathbb{D}_8$ and the associated action on $S^d\times S^d$ is
given by formulas (\ref{eqn:action}).

The importance of the representation $W$ stems from the fact that
it naturally arises \cite[Section~2.3.3]{MVZ} as the associated
{\em Test Space} for a single (probabilistic) measure $\mu$.
Indeed for each ordered pair $(H_0,H_1)$ of oriented hyperplanes
there is an associated collection of hyperorthants $O_0, O_1, O_2,
O_3$ (Figure~\ref{fig:tetra}~(a)). Then $\mu(O_\nu),\,
\nu=0,1,2,3$ are naturally interpreted as the {\em barycentric
coordinates} of a point $v =\mu(O_0)v_0 + \mu(O_1)v_1 +
\mu(O_2)v_2 + \mu(O_3)v_3\in W$ (Figure~\ref{fig:tetra}~(b)) and
the action of $\mathbb{D}_8$ on $S^d\times S^d$ induces an action
on these barycentric coordinates which is precisely the action on
$W$ as a $\mathbb{D}_8$-module described in
Section~\ref{sec:notation}. Assume that the barycenter of the
tetrahedron is the origin, i.e.\ $(1/4)(v_0+v_1+v_2+v_3)=0\in W$.
The map
$$F_\mu : S^d\times S^d
\rightarrow W, \quad (H_0,H_1)\mapsto \mu(O_0)v_0 + \mu(O_1)v_1 +
\mu(O_2)v_2 + \mu(O_3)v_3$$ has the property that $(H_0,H_1)$ is
an equipartition for $\mu$ if and only if $(H_0,H_1)$ is a zero of
$F_\mu$. More generally $z:=(H_0,H_1)$ is an equipartition for the
collection $\{\mu_1,\ldots,\mu_j\}$ of probability measures if and
only if $z=(H_0,H_1)$ is a common zero of the associated test maps
$F_{\mu_j}$,
$$
F_{\mu_1}(z) = F_{\mu_2}(z) = \ldots = F_{\mu_j}(z)=0.
$$
Summarizing we have the following proposition.

\begin{prop}\label{prop:admissible}
A triple $(d,j,2)$ is admissible if each
$\mathbb{D}_8$-equivariant map
$$
F : S^d\times S^d \rightarrow W^{\oplus j}
$$
has a zero, or equivalently if there does not exist a
$\mathbb{D}_8$-equivariant map
\begin{equation}\label{eqn:equi-map}
f : S^d\times S^d \rightarrow S(W^{\oplus j}).
\end{equation}
\end{prop}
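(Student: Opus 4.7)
The plan is to invoke the Configuration Space/Test Map machinery already outlined in the paragraphs preceding the statement, turning it into a formal proof. Given a collection $\mathcal{M}=\{\mu_1,\ldots,\mu_j\}$ of probability measures on $\mathbb{R}^d$, I would assemble the individual test maps $F_{\mu_i}$ into a single map
\begin{equation*}
F=(F_{\mu_1},\ldots,F_{\mu_j}) : S^d\times S^d \longrightarrow W^{\oplus j},
\end{equation*}
and the content of the proposition is then the translation of ``equipartition exists'' into ``$F$ has a zero'' plus the standard radial-projection argument to the sphere.

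First I would make the parameter space precise: an oriented affine hyperplane in $\mathbb{R}^d$ corresponds to a unit vector $(a_0,a_1,\ldots,a_d)\in S^d$ via the affine form $\langle a,x\rangle - a_0 = 0$ (with positive half-space $\{x:\langle a,x\rangle\geq a_0\}$), with degenerate ``hyperplanes at infinity'' $a=(\pm 1,0,\ldots,0)$ corresponding to the empty and full positive half-spaces. This compactifies the space of oriented hyperplanes to $S^d$, and an ordered pair to $S^d\times S^d$. One checks that the three involutions $\alpha,\beta,\gamma$ of \eqref{eqn:action} implement, respectively, orientation-reversal of $H_0$, orientation-reversal of $H_1$, and swap of $(H_0,H_1)$; this is the natural $\mathbb{D}_8$-action on the space of candidates.

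Next I would verify equivariance of each test map $F_{\mu}$ individually. The key observation is the one already spelled out in the excerpt: reversing the orientation of $H_0$ permutes the orthants $(O_0,O_1,O_2,O_3)$ exactly as the permutation $\alpha$ in \eqref{eqn:permutations}, and similarly for $\beta$ and $\gamma$. Since by assumption the barycenter of the tetrahedron $\{v_0,\ldots,v_3\}$ is the origin, the map $(x_0,x_1,x_2,x_3)\mapsto \sum x_\nu v_\nu$ identifies the affine hyperplane $\{\sum x_\nu =1\}$ $\mathbb{D}_8$-equivariantly with $W$, and hence
\[
F_\mu(\sigma\cdot(H_0,H_1))=\sigma\cdot F_\mu(H_0,H_1) \quad \text{for every } \sigma\in\mathbb{D}_8.
\]
Since $\mathbb{D}_8$ acts diagonally on $W^{\oplus j}$, the assembled map $F$ is also $\mathbb{D}_8$-equivariant. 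By definition, $F(z)=0$ iff each $F_{\mu_i}(z)=0$ iff every orthant has $\mu_i$-measure equal to $\tfrac{1}{4}$ for every $i$, which is precisely the condition that $z=(H_0,H_1)$ is a $2$-equipartition of $\mathcal{M}$.

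For the equivalence in the statement, if every $\mathbb{D}_8$-equivariant $F:S^d\times S^d\to W^{\oplus j}$ has a zero then in particular the test map produced above has one, so the equipartition exists; conversely, if there were a $\mathbb{D}_8$-equivariant $f:S^d\times S^d\to S(W^{\oplus j})$, composing with the inclusion $S(W^{\oplus j})\hookrightarrow W^{\oplus j}\setminus\{0\}$ would give an equivariant zero-free map, and conversely any equivariant zero-free $F$ radially projects to such an $f$ by $z\mapsto F(z)/\lVert F(z)\rVert$ (the norm is $\mathbb{D}_8$-invariant since $\mathbb{D}_8$ acts orthogonally on $W$). The only subtle point, and the one I would take the most care with, is the treatment of the two degenerate points of $S^d$ corresponding to ``hyperplanes at infinity'': one must check that at such limit configurations the formula for $F_\mu$ still returns a well-defined vector of barycentric-type coordinates (with several orthants being empty of measure), and that this extension remains continuous and $\mathbb{D}_8$-equivariant. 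This is a routine verification but is the single technical step where one must be attentive; everything else is formal once the CS/TM setup is in place.
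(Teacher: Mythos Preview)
Your proposal is correct and follows essentially the same approach as the paper: the proposition is stated there as a summary of the preceding Configuration Space/Test Map discussion, and your write-up is precisely a careful formalization of that outline (parametrization of oriented hyperplanes by $S^d$, verification that $\alpha,\beta,\gamma$ act on the orthant labels as in \eqref{eqn:permutations}, assembly of the $F_{\mu_i}$ into an equivariant map to $W^{\oplus j}$, and the radial-projection equivalence with maps to $S(W^{\oplus j})$). Your added remark about continuity at the degenerate ``hyperplanes at infinity'' is the one point the paper leaves implicit, and it is indeed routine.
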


\section{Standard admissible filtration of $S^n\times S^n$}
\label{sec:standard}

In order to prove the non-existence of an equivariant map
(\ref{eqn:equi-map}) we apply the equivariant obstruction theory
in the form outlined in Section~\ref{sec:appendix}. The first step
is a construction of an appropriate  filtration on $S^n\times S^n$
which is admissible in the sense of
Definition~\ref{defn:adm-filtration}. Less formally, we turn
$S^n\times S^n$ into a $\mathbb{D}_8$-manifold complex in the
sense of Definition~\ref{def:G-manifold-complex} by allowing
orientable $\mathbb{D}_8$-manifolds with corners and mild
singularities.

\medskip
For $i=1,\ldots,n+1$ define $\pi_i : S^n\times S^n \rightarrow
\mathbb{R}^2, \, \pi_i(x,y) := (x_i,y_i)$ as the restriction of
the obvious projection map. The maps $\pi_i$ are clearly
$\mathbb{D}_8$-equivariant and the images  ${\rm Image}(\pi_i)=
\{(x_i,y_i)\mid -1\leq x_i,y_i\leq +1\} =: Q_i$ are squares which
are here referred to as ``$\mathbb{D}_8$-screens'',
Figure~\ref{fig:screens-1}.  The screens $Q_i$ admit a
$\mathbb{D}_8$-invariant triangulation which is the starting point
for the construction of an admissible filtration on $S^n\times
S^n$.

\begin{figure}[hbt]
\centering
\includegraphics[scale=0.50]{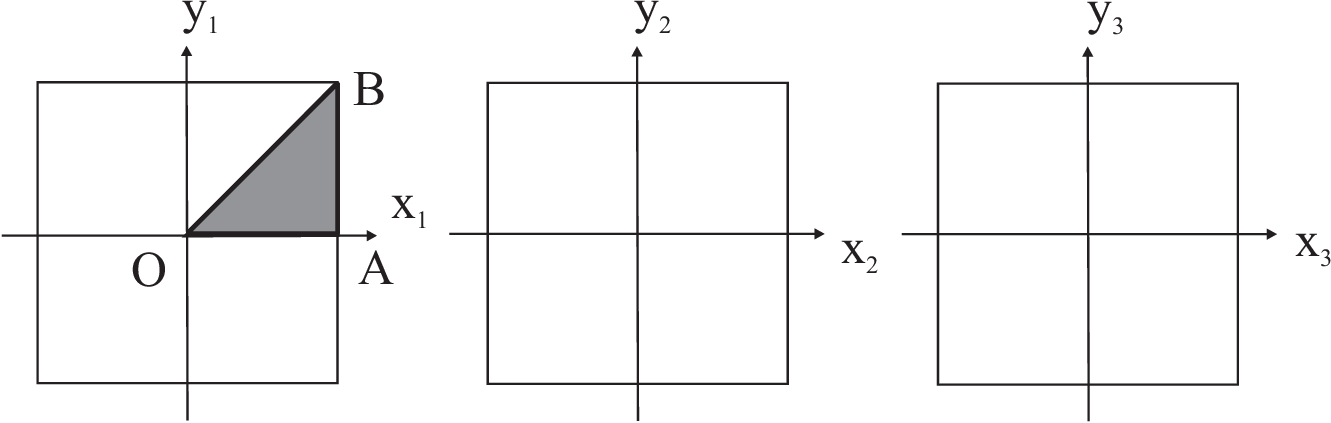}
\caption{$\mathbb{D}_8$-screens for $S^2\times S^2\subset
\mathbb{R}^3\times \mathbb{R}^3$.} \label{fig:screens-1}
\end{figure}
The filtration can be informally described as follows. The
manifold $S^n\times S^n$ is fibered over the first screen $Q_1$
with a generic fiber homeomorphic to $S^{n-1}\times S^{n-1}$. The
fiber $S^{n-1}\times S^{n-1}$ itself is fibered over the second
screen $Q_2$ with $S^{n-2}\times S^{n-2}$ as a generic fiber, etc.
This priority order of screens together with their minimal
$\mathbb{D}_8$-invariant triangulations are used to define a
version of ``lexicographic filtration'' of $S^n\times S^n$. For
the intended application of the obstruction theory methods from
Section~\ref{sec:appendix} it will be sufficient to give a precise
description only for the first three terms of this filtration.

\medskip
The $\pi_1$-preimage $X := \pi_1^{-1}(\Delta_{OAB})$ of the
triangle $\Delta_{OAB}\subset Q_1$  is (the closure of) a
fundamental domain of the $\mathbb{D}_8$-action on $S^n\times
S^n$. It is described by the inequalities $0\leq y_1\leq x_1\leq
1$ and as a subset of $S^n\times S^n$ it is an orientable manifold
with boundary. This manifold has corners and possibly
singularities of high codimension, however the associated
fundamental class $x\in H_{2n}(X,\partial X)$ is well defined. The
geometric boundary of $X$ is,
$$
\partial X = \pi_1^{-1}(\partial\Delta_{OAB}) = X_0'\cup X_1' \cup
X_2' = \pi_1^{-1}(OA)\cup \pi_1^{-1}(OB)\cup \pi_1^{-1}(AB).
$$
If $n\geq 2$ homologically significant are only $X_0'$ and $X_1'$
since $X_2'$ has codimension $n$ in $S^n\times S^n$ and they
contribute to the (homological) boundary evaluated in dimension
$2n-1$. Since ${\rm Stab}(X_0')=\langle\beta\rangle$ we subdivide
and define $X_0=X_0'\cap\{y_2\geq 0\}$ as the associated
fundamental domain. Similarly, ${\rm
Stab}(X_1')=\langle\gamma\rangle$ and $X_1=X_1'\cap \{y_2\leq
x_2\}$, cf.\ Figure~\ref{fig:screens-2} and the subsequent
tree-like diagram.

\vspace{-2mm}
\begin{figure}[hbt]
\centering
\includegraphics[scale=0.50]{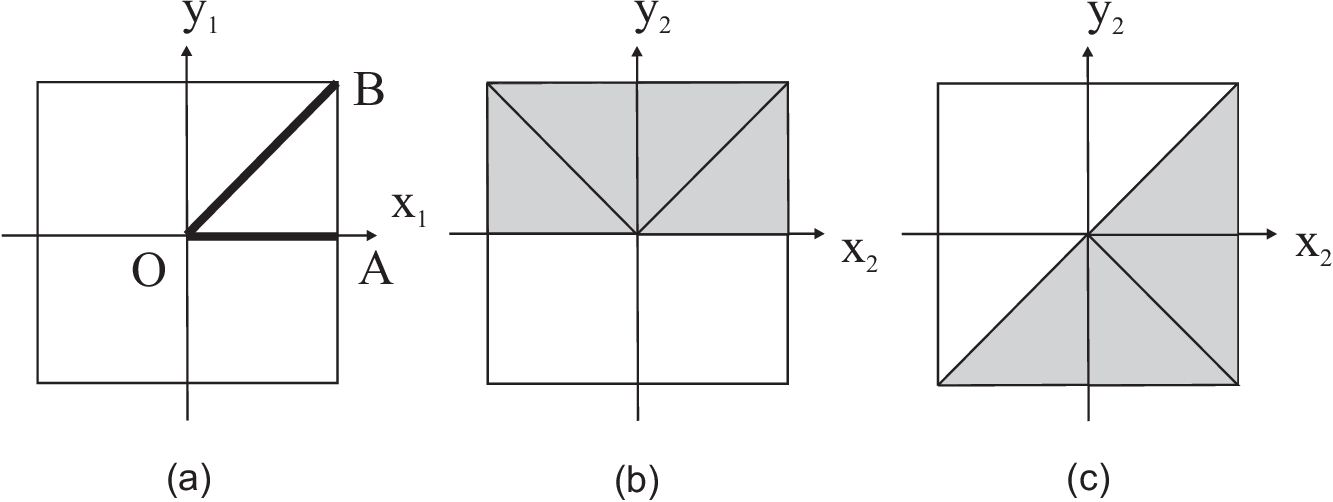}
\vspace{-2mm} \caption{Admissible filtration for $S^n\times S^n$.}
\label{fig:screens-2}
\end{figure}
Assuming that $n\geq 8$, we continue the ``subdivide and take the
boundary''-procedure, focusing only on the homologically
significant part of the boundary.
\[
\begin{array}{ccccl}
\partial X_0 & = & Z_0'\cup T_0' & = & \{x_1=y_1=0, y_2\geq 0\}\cup \{x_1\geq 0, y_1=y_2=0\} \\
\partial X_1 & = & Z_0''\cup T_1' & = & \{x_1=y_1=0, y_2\leq x_2\}\cup \{x_1=y_1, x_1\geq 0, x_2=y_2\}
\end{array}
\]
The sets $Z_0'$ and $Z_0''$ can be further subdivided as follows
(Figure~\ref{fig:screens-2})
\[
\begin{array}{rcl}
Z_0' & = & Z_0\cup \alpha Z_0\cup \gamma Z_0 \cup \alpha\gamma Z_0\\
Z_0''& = & Z_0\cup \beta Z_0\cup \beta\gamma Z_0 \cup
\alpha\beta\gamma Z_0
\end{array}
\]
where $Z_0 := \{x_1=y_1=0, 0\leq y_2\leq x_2\leq 1\}$. Finally,
$T_0' = T_0\cup \beta T_0$ and $T_1' = T_1\cup \gamma T_1$ where
$T_0 = T_0'\cap \{0\leq y_3\leq 1\}$ and $T_1 = T_1'\cap \{y_3\leq
x_3\}$.

\vspace{-2mm} {\small
\begin{equation*}
\xymatrix@C=0.01pc@R=2pc{ &&&& *+[F]\txt{$0\leq x_1\leq 1$\\$0\leq
y_1\leq 1$\\$y_1\leq x_1$}
\ar[dll]_\partial \ar[drr]^\partial &&&& \\
&&[y_1=0]\ar[dl]\ar@{.>}[dr] \ar@{}[d]|{\langle\beta\rangle} & &&
&
[x_1=y_1]\ar[dr]\ar@{.>}[dl]\ar@{}[d]|{\langle\gamma\rangle}&&\\
&[0\leq y_2]\ar[dl]_{\partial}\ar[dr]^{\partial}  & &[y_2\leq 0]
&& [x_2\leq y_2]  &
&[y_2 \leq x_2]\ar[dl]_{\partial}\ar[dr]^{\partial} &\\
*+[F]\txt{$x_1= 0$\\$y_1= 0$\\$0\leq y_2$} && *+[F--]\txt{$y_1=
0$\\$y_2= 0$\\$0\leq x_1$}  &&&& *+[F]\txt{$x_1= 0$\\$y_1=
0$\\$y_2\leq x_2$} && *+[F--]\txt{$x_1=y_1$\\$x_2=y_2$\\$0\leq
x_1$}}
\end{equation*}
}
\subsection{The fundamental domain - geometric boundary procedure}
As a summary of the construction we observe that the ``take the
boundary, then subdivide''-procedure produces an admissible
filtration
\begin{equation}\label{eqn:filtration}
S^n\times S^n = F_{2n} \supset F_{2n-1} \supset
F_{2n-2}\supset\ldots
\end{equation}
where $F_{2n-1}:=\bigcup_{g\in \mathbb{D}_8}~g(\partial X)$ and
$F_{2n-2}:= \bigcup_{g\in \mathbb{D}_8}~g(Z_0\cup T_0\cup T_1\cup
X_2)$.

For the intended application the explicit description
(\ref{eqn:filtration}) is sufficient. However, it is clear how one
can continue the construction by using screens of higher order.
Observe that the `tree of fundamental domains' is formally
generated by two types of branching while the root of the tree is
a fundamental domain of the manifold.

\section{Fragment of the chain complex for $S^n\times S^n$}
\label{sec:fragment}

The sets $X, X_0, X_1, Z_0, T_0, T_1$ described in the previous
section are connected manifolds with boundary (with corners and
possibly with mild singularities in codimension $\geq 2$). They
all can be oriented in which case the corresponding fundamental
classes are denoted by $x, x_0, x_1, z_0, t_0, t_1$. These classes
are naturally interpreted as the generators of
$\mathbb{D}_8$-modules $H_k(F_k,F_{k-1}; \mathbb{Z})$ for
$k=2n,2n-1,2n-2$.

\medskip
The orientation character of the $\mathbb{D}_8$-manifold
$S^n\times S^n$ is given by
\begin{equation}\label{eqn:or-character}
(\alpha, \beta, \gamma) = ((-1)^{n-1}, (-1)^{n-1}, (-1)^n).
\end{equation}
From (\ref{eqn:or-character}) and the analysis of geometric
boundaries given in Section~\ref{sec:standard} one deduces the
following relations:
\begin{equation}\label{eqn:boundary}
\begin{array}{ccl}
\partial x & = & (1+(-1)^n\beta)x_0 + (1+(-1)^{n-1}\gamma)x_1 \\
\partial x_0 & = & (1 + (-1)^n\alpha + (-1)^{n-1}\gamma -\alpha\gamma)z_0 + (1+(-1)^{n-1}\beta)t_0 \\
\partial x_1 & = & -(1+(-1)^n\beta -\beta\gamma + (-1)^{n-1}\alpha\beta\gamma)z_0
+ (1+(-1)^n\gamma)t_1
\end{array}
\end{equation}
The top dimensional fragment of the associated chain complex is,
\begin{equation}\label{eqn:chain-complex}
\xymatrix{ \Lambda x \ar[r]^-{B} & \Lambda x_0 \oplus  \Lambda x_1
\ar[r]^-{A} & \Lambda z_0\oplus \Lambda t_0\oplus \Lambda t_1
\ar[r] & \ldots }
\end{equation}
The boundary homomorphisms are described by (\ref{eqn:boundary})
so for example if $n$ is even,
\begin{equation}\label{eqn:matrice}
A^t = \left [
\begin{array}{ccc}
1 + \alpha - \gamma -\alpha\gamma & 1-\beta & 0 \\
-(1+\beta -\beta\gamma -\alpha\beta\gamma) & 0 & 1+\gamma
\end{array}
\right] \, B = \left [
\begin{array}{c}
1+\beta  \\ 1-\gamma
\end{array}
\right]
\end{equation}

\begin{rem}
The reader may use the $\mathbb{D}_8$-screens, introduced in
Section~\ref{subsec:admissible} and depicted in
Figures~\ref{fig:screens-1} and \ref{fig:screens-2}, as a useful
bookkeeping device for checking the formulas (\ref{eqn:boundary}).
For example the term $(1 + (-1)^n\alpha + (-1)^{n-1}\gamma
-\alpha\gamma)z_0$, in the middle equation, corresponds to the
decomposition of the shaded rectangle, shown in
Figure~\ref{fig:screens-2}-(b), into four triangles. Being in the
second screen, this rectangle corresponds to a region in
$S^{n-1}\times S^{n-1}$ whose orientation, following
(\ref{eqn:or-character}), transforms by the rule $(\alpha, \beta,
\gamma) = (-1)^n, (-1)^n, (-1)^{n-1}$. This explains the sign in
$(-1)^n\alpha$, etc.
\end{rem}

\section{Evaluation of the cohomology group
$\mathcal{H}^{2n-1}_{\mathbb{D}_8}(S^n\times S^n;\mathcal{Z})$}
\label{sec:evaluation}

The first obstruction to the existence of an equivariant map
(\ref{eqn:equi-map}) lies in the group
$\mathcal{H}^{2n-1}_{\mathbb{D}_8}(S^n\times S^n;\mathcal{Z})$
where $\mathcal{Z}\cong H_{3j-1}(S(W^{\oplus j}); \mathbb{Z})$ is
the orientation character of the sphere $S(W^{\oplus j})$. We
remind the reader that these groups are defined
(Section~\ref{sec:appendix}) as functors of $\mathbb{D}_8$-spaces
with admissible filtrations. Also note that the condition
$2d-3j=1$ (Section~\ref{sec:new}) allows us to assume that $j$ is
an odd integer.

\begin{prop}\label{prop:spectral}
Let $\mathcal{M}=\mathcal{Z}$ be the orientation character of the
sphere $S(W^{\oplus j})$ where $j$ is an odd integer. Then,
\begin{equation}\label{eqn:z4-cohomology}
{\mathcal{H}}^{2n-1}_{\mathbb{D}_8}(S^n\times
S^n;\mathcal{\mathcal{Z}}) \cong \left\{ \begin{array}{cl}
         \mathbb{Z}/4 & \mbox{if\, $n$ is even};\\
        \mathbb{Z}/2 \oplus \mathbb{Z}/2 & \mbox{if\, $n$ is odd}.\end{array}
        \right.
\end{equation}
Moreover a generator of $\mathbb{Z}/4$ can be described as the
cocycle
\begin{equation}\label{eqn:cocycle}
\phi : \Lambda x_0\oplus \Lambda x_1 \rightarrow \mathcal{Z},
\quad \phi(x_0)=1, \phi(x_1)=-1.
\end{equation}
\end{prop}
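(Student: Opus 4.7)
\medskip
\noindent\textbf{Proof plan.} The strategy is to dualise the chain complex fragment \eqref{eqn:chain-complex} against the coefficient module $\mathcal{Z}$ and read off the homology of the resulting three-term sequence of abelian groups. All of the ingredients are already on the table: the boundary formulas \eqref{eqn:boundary} and the matrix description \eqref{eqn:matrice} encode the differentials, and each summand $\Lambda y$ is free of rank one over $\Lambda$, so $\mathrm{Hom}_{\Lambda}(\Lambda y,\mathcal{Z})\cong\mathcal{Z}$ via $\phi\mapsto\phi(y)$.

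First I would identify $\mathcal{Z}$ explicitly. Since $W=U\oplus\lambda$ and $\det(\alpha|_W)=\det(\beta|_W)=1$ while $\det(\gamma|_W)=-1$, and since $j$ is odd, the orientation character $\mathcal{Z}$ of $S(W^{\oplus j})$ is $\mathbb{Z}$ with action $\alpha\cdot m=\beta\cdot m=m$ and $\gamma\cdot m=-m$. With this in hand, applying $\mathrm{Hom}_{\Lambda}(-,\mathcal{Z})$ to \eqref{eqn:chain-complex} produces the three-term cochain complex
\begin{equation*}
\mathcal{Z}^{3}\xrightarrow{\ A^{*}\ }\mathcal{Z}^{2}\xrightarrow{\ B^{*}\ }\mathcal{Z}
\end{equation*}
whose middle cohomology is $\mathcal{H}^{2n-1}_{\mathbb{D}_8}(S^n\times S^n;\mathcal{Z})$.

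Next I would compute the two differentials in each parity, which is essentially a finite check: every monomial $g\in\mathbb{D}_8$ acts on $\mathcal{Z}$ by the sign $\det(g|_W)^{j}$ determined above, so the $\Lambda$-linear boundary operators $\partial x$, $\partial x_0$, $\partial x_1$ collapse to integer multiplications on each coordinate. For $n$ even this yields $B^{*}(a,b)=2a+2b$ and $A^{*}(p,q,r)=(4p,-4p)$: indeed $(1+\alpha-\gamma-\alpha\gamma)$ acts on $\mathcal{Z}$ as $1+1+1+1=4$, $(1-\beta)$ as $0$, and $(1+\gamma)$ as $0$, and similarly for the second row. Hence $\ker B^{*}=\mathbb{Z}\cdot(1,-1)$ and $\operatorname{im} A^{*}=4\mathbb{Z}\cdot(1,-1)$, giving $\mathcal{H}^{2n-1}_{\mathbb{D}_8}\cong\mathbb{Z}/4$ with generator the class of $(1,-1)$, i.e.\ exactly the cocycle $\phi$ of \eqref{eqn:cocycle}.

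For $n$ odd the signs in \eqref{eqn:boundary} flip; running the same calculation, $(1-\beta)$ and $(1+\gamma)$ both act as $0$ on $\mathcal{Z}$, so $B^{*}=0$, while $(1-\alpha+\gamma-\alpha\gamma)$ and $(1-\beta-\beta\gamma+\alpha\beta\gamma)$ both vanish and $(1+\beta)$, $(1-\gamma)$ both act as multiplication by $2$; so $A^{*}(p,q,r)=(2q,2r)$ and the cohomology is $\mathbb{Z}/2\oplus\mathbb{Z}/2$. I expect no serious obstacle in the argument: the one place to be careful is keeping track of signs coming from the orientation character \eqref{eqn:or-character} of $S^{n}\times S^{n}$ versus those coming from $\mathcal{Z}$, since it is the interplay of these two sign conventions that produces the $\mathbb{Z}/4$ rather than a sum of $\mathbb{Z}/2$'s in the even case.
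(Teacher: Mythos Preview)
Your approach is essentially identical to the paper's: both dualise the fragment \eqref{eqn:chain-complex} against $\mathcal{Z}$, identify the character as $(\alpha,\beta,\gamma)=(+1,+1,-1)$, and read off the matrices $A_1,B_1$ to compute the middle cohomology. Your proposal is in fact slightly more explicit than the paper's proof, which carries out the even case in detail and dismisses the odd case with ``established by a similar calculation,'' whereas you spell out both.
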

\begin{proof}
The orientation character $\mathcal{Z}$ of the
$\mathbb{D}_8$-sphere $S(W)$ is easily determined from the signs
of permutations (\ref{eqn:permutations}) and reads as follows
\begin{equation}\label{eqn:karakter}
(\alpha,\beta,\gamma) = (+1,+1,-1).
\end{equation}
Since $j$ is an odd integer the same answer is obtained for the
orientation character of the sphere $S(W^{\oplus j})$. Assume that
$n$ is even. By applying the functor ${\rm Hom}(\cdot ,
\mathcal{Z})$ to the chain complex (\ref{eqn:chain-complex}) we
obtain the complex
\begin{equation}\label{eqn:cohomology-calc}
\xymatrix{ \mathbb{Z}  & \mathbb{Z} \oplus  \mathbb{Z}
\ar[l]_-{B_1} & \mathbb{Z}\oplus \mathbb{Z}\oplus \mathbb{Z}
\ar[l]_-{A_1} }
\end{equation}
where,
\[
A_1 = \left [
\begin{array}{ccc}
4 & 0 & 0 \\
-4 & 0 & 0
\end{array}
\right] \quad B_1^t = \left [
\begin{array}{c}
2  \\ 2
\end{array}
\right]
\]
From here we deduce that
\begin{equation}
\mathcal{H}^{2n-1}_{\mathbb{D}_8}(S^n\times S^n;\mathcal{Z}) \cong
\mathbb{Z}/4
\end{equation}
where a generator is the cocycle $\phi : \Lambda x_0\oplus \Lambda
x_1 \rightarrow \mathcal{Z}, \, \phi(x_0)=1, \phi(x_1)=-1$. The
second half of (\ref{eqn:z4-cohomology}) is established by a
similar calculation.
\end{proof}

\section{Evaluation of the obstruction}
\label{sec:obstr-evaluation}

In light of Propositions~\ref{prop:admissible} and
\ref{prop:spectral} (isomorphisms (\ref{eqn:z4-cohomology})) our
primary concern are admissible triples $(d,j,2)$ such that $d$ is
an even integer; this is precisely the case when the obstruction
group is $\mathbb{Z}/4$. Hence, throughout this section we assume
that $(d,j)=(6k+2,4k+1)$ for some integer $k\geq 0$.

\medskip
As emphasized in Section~\ref{subsec:heuristic} the evaluation of
the obstruction class $\theta$ can be linked with the computation
of mapping degrees of carefully chosen maps. We begin with a map
which plays an auxiliary role in these calculations.

\subsection{The degree of the multiplication of monic polynomials}

Let $\mathcal{P}_m := \{p(x) = a_0 + a_1x+\ldots +a_{m-1}x^{m-1}+
a_mx^m \mid a_i\in \mathbb{R}\}$ be the vector space of
polynomials of degree at most $m$ with coefficients in the field
of real numbers. Let
$$
\mu_{m,n} : \mathcal{P}_m \times \mathcal{P}_n \rightarrow
\mathcal{P}_{m+n}
$$
be the multiplication of polynomials. We focus our attention on
the (affine) space $\mathcal{P}_m^0 := \{p(x) = a_0 + a_1x+\ldots
+a_{m-1}x^{m-1}+ x^m \mid  a_i\in \mathbb{R}\}$ of monic
polynomials of degree $m$ and the associated multiplication map
\begin{equation}\label{eqn:monic-multiplication}
\mu_{m,n}^0 : \mathcal{P}_m^0 \times \mathcal{P}_n^0 \rightarrow
\mathcal{P}_{m+n}^0
\end{equation}

Our objective is to evaluate the mapping degree of the map
$\mu_{m,n}^0$, say as the algebraic count of the number of points
in the pre-image $f^{-1}(z)$ of a regular point $z\in
\mathcal{P}_{m+n}^0$. We begin with a preliminary proposition
which guarantees that the degree is well defined.

\begin{prop}\label{prop:proper}
The multiplication {\em (\ref{eqn:monic-multiplication})} of monic
polynomials is a proper map of manifolds.
\end{prop}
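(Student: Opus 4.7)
The plan is to reduce properness to a boundedness statement. Since $\mathcal{P}_m^0 \cong \mathbb{R}^m$, $\mathcal{P}_n^0 \cong \mathbb{R}^n$, and $\mathcal{P}_{m+n}^0 \cong \mathbb{R}^{m+n}$ (using coefficients as coordinates) and $\mu_{m,n}^0$ is polynomial in these coordinates, it is continuous, so preimages of compact sets are automatically closed. In a finite-dimensional Euclidean space, a closed set is compact iff it is bounded. Hence it suffices to show that for any bounded set $K \subset \mathcal{P}_{m+n}^0$, the preimage $(\mu_{m,n}^0)^{-1}(K)$ is bounded.

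The key idea is the classical equivalence, for monic polynomials, between boundedness of coefficients and boundedness of roots. First I would recall that if $p(x) = x^m + a_{m-1}x^{m-1} + \cdots + a_0$ has complex roots $\alpha_1, \ldots, \alpha_m$, then by Vieta's formulas each $a_k$ is (up to sign) an elementary symmetric polynomial in the $\alpha_i$, so a uniform bound $|\alpha_i| \leq R$ yields
\[
|a_k| \leq \binom{m}{k} R^{m-k}.
\]
Conversely, Cauchy's root bound gives that if $|a_k| \leq M$ for all $k$, every root satisfies $|\alpha| \leq 1 + M$.

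Now take $(p,q) \in (\mu_{m,n}^0)^{-1}(K)$ and set $r = pq$. Since $K$ is bounded, the coefficients of $r$ are bounded by some constant $M = M(K)$, so by Cauchy's bound every root of $r$ lies in the disk $|z| \leq 1 + M$. But the multiset of roots of $r = p \cdot q$ is precisely the disjoint union of the roots of $p$ and the roots of $q$ (counted with multiplicity), so all roots of $p$ and all roots of $q$ lie in the same disk. Applying the Vieta estimate in the reverse direction to $p$ and $q$ separately bounds the coefficients of $p$ and of $q$ by a constant depending only on $K$, $m$, and $n$. Thus $(\mu_{m,n}^0)^{-1}(K)$ is bounded, hence compact, which is exactly properness.

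There is no genuine obstacle here; the argument is a conceptual one-liner (\emph{bounded coefficients} $\Leftrightarrow$ \emph{bounded roots}, together with the fact that roots of $pq$ split as those of $p$ and $q$). The only point that requires a small amount of care is the use of monicity: it is essential both for Cauchy's bound to take the clean form stated and for the elementary symmetric functions to recover \emph{all} coefficients of $p$ and $q$ from their roots --- without monicity one would lose control of the leading coefficient and the map $\mu_{m,n}$ itself fails to be proper (the factorization $t \cdot p = (ct) \cdot (c^{-1}p)$ escapes to infinity).
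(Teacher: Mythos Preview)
Your proof is correct and follows essentially the same route as the paper: both reduce properness to the equivalence, for monic polynomials, between boundedness of coefficients and boundedness of roots, invoking Vi\`ete's formulas for one direction and a Cauchy-type root bound for the other, and then use that the roots of $pq$ are exactly the roots of $p$ together with those of $q$. Your write-up is slightly more explicit (and your remark on the necessity of monicity is a nice addition), but the underlying argument is the same.
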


\begin{proof} Assume that $A\subset \mathcal{P}_m^0$ and $B\subset
\mathcal{P}_n^0$ are sets of polynomials such that $A\cdot B
:=\{p\cdot q \mid p\in A, q\in B\}$ is bounded as a set of
polynomials in $\mathcal{P}_{m+n}^0\cong \mathbb{R}^{m+n}$. We
want to conclude that both $A$ and $B$ individually a bounded sets
of polynomials. This is easily deduced from the following claim.

\medskip\noindent
{\bf Claim:} If $A\subset \mathcal{P}_n^0$ is bounded set of
polynomials then the set $Root(A) := \{z\in \mathbb{C} \mid p(z)=0
\mbox{ {\rm for some} } p\in A \}$ is also bounded. Conversely, if
$Root(A)$ is a bounded, $A$ is also a bounded set of polynomials.

\medskip\noindent
{\bf Proof of the Claim:} The implication $\Leftarrow$ follows
from Vi\`{e}te's formulas, while the opposite implication
$\Rightarrow$ follows from the inequality $\vert \lambda\vert \leq
\mbox{\rm Max}\{1, \sum_{j=0}^{n-1} \vert a_j\vert\}$, where
$\lambda$ is a root of a polynomial with coefficients $a_j$.
\end{proof}

The next step needed for computation of the mapping degree of the
map (\ref{eqn:monic-multiplication}) is the evaluation of the
differential $d\mu^0_{m,n}$. The tangent space
$T_p(\mathcal{P}_m^0)$ at the monic polynomial $p\in
\mathcal{P}_m^0$ is naturally identified with the space
$\mathcal{P}_{m-1}$ of all polynomials of degree at most $m-1$.

\begin{lem} Given monic polynomials $p\in \mathcal{P}_m^0$ and
$q\in \mathcal{P}_n^0$ and the polynomials $u\in
\mathcal{P}_{m-1}, v\in \mathcal{P}_{n-1}$, playing the role of
the associated tangent vectors, the differential $d\mu^0_{m,n} =
d\mu$ is evaluated by the formula
\begin{equation}\label{eqn:dif}
d\mu_{(p,q)}(u,v) = \frac{d}{dt}(p+tu)(q+tv)_{\vert_{t=0}} = pv +
uq.
\end{equation}
\end{lem}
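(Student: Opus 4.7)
The plan is to recognize that $\mathcal{P}_m^0$ is an affine space modeled on the vector space $\mathcal{P}_{m-1}$ (adding a polynomial of degree $\leq m-1$ to a monic polynomial of degree $m$ preserves monicity of degree $m$), so for $p\in\mathcal{P}_m^0$ the tangent space $T_p(\mathcal{P}_m^0)$ is canonically $\mathcal{P}_{m-1}$, as stated just before the lemma. Given tangent vectors $u\in\mathcal{P}_{m-1}$ and $v\in\mathcal{P}_{n-1}$, the curve $\gamma(t)=(p+tu,\,q+tv)$ is a smooth path in $\mathcal{P}_m^0\times \mathcal{P}_n^0$ through $(p,q)$ at $t=0$ with initial velocity $(u,v)$, so by definition $d\mu^0_{m,n}\bigl|_{(p,q)}(u,v)=\tfrac{d}{dt}\mu^0_{m,n}(\gamma(t))\big|_{t=0}$.

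The computation itself is a direct application of the product rule in the polynomial ring $\mathbb{R}[x]$, or equivalently a one-line expansion: I would write
\begin{equation*}
(p+tu)(q+tv)=pq + t(pv+uq) + t^2 uv,
\end{equation*}
and then differentiate with respect to $t$ and set $t=0$, obtaining $pv+uq$. Since $\deg(pv)\le m+n-1$ and $\deg(uq)\le m+n-1$, the result lies in $\mathcal{P}_{m+n-1}\cong T_{pq}(\mathcal{P}_{m+n}^0)$, confirming it is a bona fide tangent vector at the image point.

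There is really no obstacle of substance here; the only thing to check with mild care is the affine-structure identification of tangent spaces, which ensures that the formal derivative computed in $\mathbb{R}[x]$ actually represents the intrinsic differential of the map of manifolds $\mu^0_{m,n}\colon \mathcal{P}_m^0\times\mathcal{P}_n^0\to\mathcal{P}_{m+n}^0$. The lemma then serves its intended purpose in the next subsection, namely providing a tractable formula for $d\mu^0_{m,n}$ whose rank can be analyzed (via coprimality of $p$ and $q$, in the spirit of the Bézout/resultant criterion) when the mapping degree of $\mu^0_{m,n}$ is computed.
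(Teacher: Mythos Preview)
Your proposal is correct and is exactly the intended argument: the paper states this lemma without proof, since the formula $d\mu_{(p,q)}(u,v)=pv+uq$ is immediate from expanding $(p+tu)(q+tv)=pq+t(pv+uq)+t^2uv$ and differentiating at $t=0$. Your additional remarks on the affine identification $T_p(\mathcal{P}_m^0)\cong\mathcal{P}_{m-1}$ and the degree bound $\deg(pv+uq)\le m+n-1$ are the right sanity checks, though the paper simply takes them for granted.
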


Let us determine the matrix of the map $d\mu^0_{(p,q)}$ in
suitable bases of the associated tangent spaces
$T_{(p,q)}(\mathcal{P}_m^0\times \mathcal{P}_n^0)\cong
\mathcal{P}_{m-1} \times \mathcal{P}_{n-1}$ and
$T_{pq}(\mathcal{P}_{m+n}^0)\cong \mathcal{P}_{m+n-1}$. A
canonical choice of basis for $\mathcal{P}_{m-1}^0$ is $u_0 = 1,
u_1 = x , \ldots , u_{m-1} = x^{m-1}$ with similar choices $v_0 =
1, v_1=x, \ldots , v_{n-1} = x^{n-1}$ and $w_0=1, w_1=x,\ldots ,
w_{p+q-1}=x^{m+n-1}$ for $\mathcal{P}_{n-1}^0$ and
$\mathcal{P}_{m+n-1}^0$ respectively. Formula (\ref{eqn:dif})
applied to this basis gives
$$
d\mu_{(p,q)}(0, v^j) = d\mu_{(p,q)}(0, x^j) = x^jp(x) \qquad
d\mu_{(p,q)}(u_i, 0) = d\mu_{(p,q)}(x^i, 0) = x^iq(x).
$$
We conclude from here that the determinant of this matrix is equal
to the {\em resultant} (\ref{eqn:resultant}) of two polynomials!
 {\small
\begin{equation}\label{eqn:resultant}
\begin{array}{ccc}
\mathcal{R}(p,q) & = & Det
\begin{bmatrix}
  a_0 & a_1 & \ldots & a_{m-1} & 0 & 0 & \ldots & 0 \\
  0   & a_0 & a_1 & \ldots & a_{m-1}  & 0 & \ldots & 0\\
  \vdots & \vdots & \ddots & \ddots & \vdots & \vdots & \ddots &
  \vdots\\
  0 & 0 & \ldots & 0 & a_0 & a_1 & \ldots & a_{m-1}\\
  b_0 & b_1 & \ldots & b_{n-1} & 0 & 0 & \ldots & 0 \\
  0   & b_0 & b_1 & \ldots & b_{n-1}  & 0 & \ldots & 0\\
  \vdots & \vdots & \ddots & \ddots & \vdots & \vdots & \ddots &
  \vdots\\
  0 & 0 & \ldots & 0 & b_0 & b_1 & \ldots & b_{n-1}
  \end{bmatrix}
  \end{array}
\end{equation}
 }
In particular we can use classical formulas for
$\mathcal{R}(p,q)$, see \cite[Chapter~12]{GKZ}, among them the
formula
\begin{equation}\label{eqn:resultant-formula}
\mathcal{R}(p,q) = \prod_{i, j} (\alpha_i - \beta_j)
\end{equation}
where $\alpha_i$ are roots of $p$ and $\beta_j$ are roots of $q$
respectively, counted with the appropriate multiplicities.

\begin{lem}\label{lema:znak} Suppose that $p(x) = a_0 +
a_1x+\ldots + a_{m-1}x^{m-1} + x^m$ and $p(x) = b_0 + b_1x+\ldots
+ b_{n-1}x^{n-1} + x^n$ are two polynomials with real coefficients
such that the corresponding roots $\alpha_1,\ldots ,\alpha_m$ and
$\beta_1,\ldots, \beta_n$ are all distinct and non-real,
$\{\alpha_i\}_{i=1}^m\cup \{\beta_j\}_{j=1}^n\subset
\mathbb{C}\setminus \mathbb{R}$. Then the resultant of polynomials
$p$ and $q$ is real and positive, $\mathcal{R}(p,q) > 0$.
\end{lem}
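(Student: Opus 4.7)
The plan is to use the product formula (\ref{eqn:resultant-formula}) and group the roots into complex-conjugate pairs, exploiting the fact that $p$ and $q$ have real coefficients.

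First, I would observe that since $p$ has real coefficients and no real roots, its roots occur in complex-conjugate pairs, and in particular $m$ must be even, say $m = 2m'$. Reindex so that the roots of $p$ are $\alpha_1,\bar\alpha_1,\ldots,\alpha_{m'},\bar\alpha_{m'}$, and analogously $n = 2n'$ with roots of $q$ listed as $\beta_1,\bar\beta_1,\ldots,\beta_{n'},\bar\beta_{n'}$. The distinctness hypothesis guarantees that $\alpha_i\neq\beta_j$ and, since conjugation sends roots of $q$ to roots of $q$, also $\alpha_i\neq\bar\beta_j$.

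Next, I would partition the product $\prod_{i,j}(\alpha_i-\beta_j)$ in (\ref{eqn:resultant-formula}) according to pairs of conjugate pairs $(\{\alpha_i,\bar\alpha_i\},\{\beta_j,\bar\beta_j\})$. Each such block contributes the factor
\[
(\alpha_i-\beta_j)(\bar\alpha_i-\bar\beta_j)\cdot(\alpha_i-\bar\beta_j)(\bar\alpha_i-\beta_j).
\]
Using $\overline{\alpha_i-\beta_j}=\bar\alpha_i-\bar\beta_j$ and $\overline{\alpha_i-\bar\beta_j}=\bar\alpha_i-\beta_j$, this block equals $|\alpha_i-\beta_j|^2\,|\alpha_i-\bar\beta_j|^2$, which is strictly positive by the distinctness hypothesis.

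Finally, I would conclude that $\mathcal{R}(p,q)$ is a product of strictly positive real numbers, hence real and positive. There is no real obstacle here; the only subtlety is the bookkeeping of the four-fold grouping and checking that distinctness rules out the factor being zero. The argument also implicitly shows that the hypothesis forces $m$ and $n$ to be even, which will matter for the degree computation of $\mu^0_{m,n}$ in the intended application.
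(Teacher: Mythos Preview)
Your argument is correct and is essentially the paper's own proof: both use the product formula (\ref{eqn:resultant-formula}), pair off the roots into conjugate pairs, and observe that each four-term block $(\alpha-\beta)(\bar\alpha-\bar\beta)(\alpha-\bar\beta)(\bar\alpha-\beta)$ equals $|\alpha-\beta|^2|\alpha-\bar\beta|^2>0$. Your version is slightly more explicit about the distinctness hypothesis ruling out zero factors, but the underlying idea is identical.
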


\begin{proof} By assumption all roots of $p(x)$ (respectively $q(x)$) can
be divided in conjugate pairs $\alpha, \overline{\alpha}$
(respectively $\beta, \overline{\beta}$). These two pairs
contribute to the product (\ref{eqn:resultant-formula}) the factor
$$
(\alpha - \beta)(\overline{\alpha} - \overline{\beta})(\alpha -
\overline{\beta})(\overline{\alpha} - \beta) = A\overline{A}
B\overline{B} > 0.
$$
\end{proof}

\begin{prop}\label{prop:degree-multiplication}
Suppose that $m=2k$ and $n=2l$ are even integers. The degree of
the map $\mu^0_{m,n} : \mathcal{P}_m^0\times
\mathcal{P}_n^0\rightarrow \mathcal{P}_{m+n}^0$ is
\begin{equation}\label{eqn:binomial}
\mbox{\rm deg}(\mu^0_{m,n})=  {{k+l}\choose{k}}.
\end{equation}
\end{prop}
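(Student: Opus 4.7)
The plan is to compute the degree of $\mu^0_{m,n}$ by counting its preimages, with sign, over a carefully chosen regular value. Since the map is proper by Proposition~\ref{prop:proper} and $\mathcal{P}_m^0, \mathcal{P}_n^0, \mathcal{P}_{m+n}^0$ all carry canonical orientations inherited from their coefficient coordinates, the degree is well-defined and may be computed from a single regular value.

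First I would select $r\in \mathcal{P}_{m+n}^0$ to be a monic polynomial of degree $m+n=2(k+l)$ whose $m+n$ roots are distinct and all non-real, thus organized into $k+l$ complex-conjugate pairs. I would then identify the preimages $\mu^{0\,-1}_{m,n}(r)$. A preimage is a factorization $r=pq$ with $p\in\mathcal{P}_m^0$ and $q\in\mathcal{P}_n^0$; because $p$ and $q$ have real coefficients, the set of roots each absorbs must be closed under complex conjugation. Since $r$ has no real roots, and its roots are simple and come in $k+l$ conjugate pairs, each such factorization is obtained by distributing the conjugate pairs between $p$ and $q$. To have $\deg p = m = 2k$, exactly $k$ of the pairs must be assigned to $p$, and the remaining $l$ to $q$. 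The number of such factorizations is therefore
\begin{equation*}
\#\,\mu^{0\,-1}_{m,n}(r) = \binom{k+l}{k}.
\end{equation*}

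Next I would verify that $r$ is a regular value and that every preimage contributes $+1$ to the degree. The differential $d\mu^0_{(p,q)}$ has, in the canonical coordinate bases, a matrix whose determinant equals the resultant $\mathcal{R}(p,q)$, by the identification made in \eqref{eqn:resultant}. At each preimage $(p,q)$ of $r$, both $p$ and $q$ have only simple non-real roots, so Lemma~\ref{lema:znak} yields $\mathcal{R}(p,q)>0$. In particular $d\mu^0_{(p,q)}$ is an orientation-preserving isomorphism, so $r$ is a regular value and the local degree at each preimage equals $+1$.

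Summing the local degrees over the $\binom{k+l}{k}$ preimages gives $\deg(\mu^0_{m,n})=\binom{k+l}{k}$, which is \eqref{eqn:binomial}. The main conceptual step is the reality constraint forcing factorizations to respect the conjugation pairing of the roots; the only subtle point to double-check is the compatibility of the coefficient orientations of $\mathcal{P}_m^0\times\mathcal{P}_n^0$ and $\mathcal{P}_{m+n}^0$ with the sign of the determinant of $d\mu^0_{(p,q)}$, but this is precisely encoded in the identification of the Jacobian with the resultant in \eqref{eqn:resultant}.
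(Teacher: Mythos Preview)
Your proof is correct and follows essentially the same approach as the paper: choose a monic polynomial of degree $m+n$ with all roots distinct and non-real, observe that its real factorizations into monic factors of degrees $m$ and $n$ correspond to choosing $k$ of the $k+l$ conjugate pairs, and invoke Lemma~\ref{lema:znak} together with the identification of the Jacobian with the resultant to conclude that each of the $\binom{k+l}{k}$ preimages contributes $+1$.
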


\begin{proof}
We compute the degree $\mbox{\rm deg}(\mu^0_{m,n})$ by an
algebraic count of the number of points in the pre-image
$(\mu^0_{m,n})^{-1}(\rho)$ where the polynomial $\rho\in
\mathcal{P}_{m+n}^0$ is a regular value of the map $\mu^0_{m,n}$.

Assume that $\rho = \rho_1\rho_2\ldots \rho_{k+l}$ is a product of
pairwise distinct, irreducible, quadratic (monic) polynomials
$\rho_i$. Equivalently $\rho$ does not have real roots and all its
roots are pairwise distinct. Note that such a polynomial can be
easily constructed by prescribing its roots, for example it can be
found in any neighborhood of the polynomial $x^{m+n}$. The inverse
image $(\mu^0_{m,n})^{-1}(\rho)$ is,
$$
(\mu^0_{m,n})^{-1}(\rho) = \{(p,q)\in \mathcal{P}_m^0\times
\mathcal{P}_n^0 \mid p\cdot q=\rho\}.
$$
It follows from Lemma~\ref{lema:znak} that $\mathcal{R}(p,q)>0$
for each pair of polynomials in the inverse image
$(\mu^0_{m,n})^{-1}(\rho)$. In particular $\rho$ is a regular
value of the map $\mu^0_{m,n}$ and each pair $(p,q)$ such that
$p\cdot q=\rho$ contributes $+1$ to the degree. From here we
deduce that
$$
\mbox{\rm deg}(\mu^0_{m,n})= {{k+l}\choose{k}}
$$
\end{proof}

The multiplication map $\mu_{m,n}(p,q)=p\cdot q$ is non-degenerate
in the sense that $\mu_{m,n}(p,q)=0$ implies that either $p=0$ or
$q=0$. As a consequence it induces a map
$$
\mu_{m,n}' : \mathcal{P}_m'\times \mathcal{P}_n'\rightarrow
\mathcal{P}_{m+n}'
$$
where $\mathcal{P}_m':= \mathcal{P}_m\setminus\{0\}$ is the set of
non-zero polynomials.

\begin{prop}\label{prop:degree-multiplication-bis}
Suppose that $m=2k$ and $n=2l$ are even integers. The degree of
the map $\mu_{m,n}' : \mathcal{P}_m'\times
\mathcal{P}_n'\rightarrow \mathcal{P}_{m+n}'$ is
\begin{equation}\label{eqn:binomial-bis}
\mbox{\rm deg}(\mu_{m,n}')= \pm\, 2\cdot{{k+l}\choose{k}}.
\end{equation}
\end{prop}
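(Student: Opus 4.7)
The plan is to deduce Proposition \ref{prop:degree-multiplication-bis} from Proposition \ref{prop:degree-multiplication} by descending through the radial projection. Since each of $\mathcal{P}_m'$, $\mathcal{P}_n'$, $\mathcal{P}_{m+n}'$ is a punctured real vector space of the homotopy type of a sphere of respective dimension $m$, $n$, $m+n$, the degree of $\mu_{m,n}'$ (its action on top nonvanishing cohomology) coincides with the classical degree of the sphere map
\[
\bar\mu : S^m \times S^n \to S^{m+n}, \qquad \bar\mu(\hat p, \hat q) := (\hat p\cdot \hat q)/\|\hat p\cdot \hat q\|,
\]
which is well-defined because polynomial multiplication is nondegenerate. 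I would compute $\deg(\bar\mu)$ by the algebraic count of preimages of a regular value, reusing the polynomial $\rho\in\mathcal{P}_{m+n}^0$ of Proposition \ref{prop:degree-multiplication}: monic of degree $m+n$, with pairwise distinct non-real roots.

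A pair $(\hat p,\hat q)\in S^m\times S^n$ lies in $\bar\mu^{-1}(\rho/\|\rho\|)$ precisely when $\hat p\cdot \hat q = \lambda\rho$ for some $\lambda>0$. This forces $\deg\hat p = m$ and $\deg\hat q = n$, and the leading coefficients $a_m(\hat p)$ and $a_n(\hat q)$ must have the same sign (their product equals $\lambda>0$). The preimage accordingly decomposes into two pieces, one supported in $U_m\times U_n$ and the other in $L_m\times L_n$, where $U_d, L_d \subset S^d$ are the open hemispheres selected by the sign of the leading coefficient. Within each piece, rescaling $(\hat p,\hat q)$ to the canonical representative with leading coefficient $\pm 1$ establishes a bijection with the solution set of $p\cdot q = \rho$ in $\mathcal{P}_m^0\times\mathcal{P}_n^0$ (up to an overall sign flip on both factors, which preserves the product). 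Hence each piece has $\binom{k+l}{k}$ points by Proposition \ref{prop:degree-multiplication}, and altogether $|\bar\mu^{-1}(\rho/\|\rho\|)| = 2\binom{k+l}{k}$.

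The decisive step is the sign analysis. Preimages in the $U_m\times U_n$ stratum all contribute with a common sign, since their Jacobian determinants are given (up to the innocuous correction from radial projection) by the resultant $\mathcal{R}(p,q)$, which is strictly positive by Lemma \ref{lema:znak}. The identification between the two strata is the antipodal involution $(\hat p, \hat q) \mapsto (-\hat p, -\hat q)$, whose degree is $(-1)^{m+1}(-1)^{n+1} = (-1)^{m+n+2} = +1$ since $m=2k$ and $n=2l$ are both \emph{even}; equivalently, from the formula $d\mu_{(p,q)}(u,v) = uq + pv$ of (\ref{eqn:dif}), substituting $p \mapsto -p$ multiplies the $n$ ``$v$-columns'' of the Jacobian matrix by $-1$ (null effect since $n$ is even), and analogously for $q \mapsto -q$ on the $m$ ``$u$-columns''. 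Therefore all $2\binom{k+l}{k}$ preimage points contribute with a common sign, yielding $\deg(\bar\mu) = \pm 2\binom{k+l}{k}$. The main obstacle is precisely this sign bookkeeping; the even parities of $m$ and $n$ are indispensable, for otherwise the two hemisphere strata would cancel rather than add.
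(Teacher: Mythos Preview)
Your argument is correct and follows essentially the same route as the paper's own proof: pass to the sphere map $\bar\mu$ via radial projection, pick the regular value coming from the monic polynomial $\rho$ of Proposition~\ref{prop:degree-multiplication}, observe that the preimage consists of the pairs $(p,q)$ and $(-p,-q)$ associated to each factorization $p\cdot q=\rho$, and use that the involution $(x,y)\mapsto(-x,-y)$ preserves the orientation of $S^m\times S^n$ when $m,n$ are even. Your writeup is in fact somewhat more explicit about the sign bookkeeping (the hemisphere decomposition and the alternative Jacobian check via~(\ref{eqn:dif})) than the paper's version, but the underlying idea is identical.
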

\begin{proof}
Let $\mathcal{P}_d^0 := \{a_0+\ldots +a_{d}t^{d}\in \mathcal{P}_d
\mid a_d=1\}$ be the hyperplane of monic polynomials, viewed as
the tangent space at $t^{d}$ to the euclidean sphere $S^{d-1} =
\{p\in\mathcal{P}_d\mid \|p \|=1  \}$ in the space of polynomials
$\mathcal{P}_d$. The degree $\mbox{\rm deg}(\mu_{m,n}')$ can be
evaluated as the degree of the map
\begin{equation}
\hat\mu=\hat\mu_{m,n} : S^m \times S^n \rightarrow S^{m+n}
\end{equation}
where $\hat\mu(p,q) = (p\cdot q)/\| p\cdot q \|$ is the
multiplication of polynomials composed with the radial projection.
The radial projection also induces a bijection of
$\mathcal{P}_d^0$ with the open hemisphere $S^d_+
:=\{p\in\mathcal{P}_d\mid \|p \|=1, \, a_d>0\}$. Similarly
$-\mathcal{P}_d^0$ is radially projected on the negative (open)
hemisphere $S^d_-$.

Following the proof of
Proposition~\ref{prop:degree-multiplication}, let us choose for a
regular value $\rho$ of the map $\mu^0_{m,n}$ a polynomial very
close to the north pole $t^{m+n}$ of the sphere $S^{m+n}$. We
observe that each decomposition $p\cdot q = \rho$, contributing to
the degree of $\mu^0_{m,n}$, defines two points, $(p,q)$ and
$(-p,-q)$ in the preimage $\hat\mu^{-1}(\rho)$. Since the map
$(x,y)\mapsto (-x,-y)$ preserves the orientation of the manifold
$S^m\times S^n$, we observe that both $(p,q)$ and $(-p,-q)$
contribute to the degree of $\hat\mu_{m,n}$ with the same sign
which is independent of the choice of the decomposition $p\cdot q
= \rho$. As a consequence, $\mbox{\rm deg}(\hat\mu_{m,n}) = 2\cdot
\mbox{\rm deg}(\mu^0_{m,n})$ which in light of
(\ref{eqn:binomial}) completes the proof of the proposition.
\end{proof}

\begin{cor}\label{cor:degree} Suppose that
$f : S^m\times S^m \rightarrow S(\lambda^{\oplus (2m+1)})$ is a
$\mathbb{D}_8$-equivariant map where $m=2k$ is an even integer.
Then
$$
\mbox{\rm deg}(f) \equiv 4{{2k-1}\choose{k-1}} \quad (\mbox{\rm
mod}\, 8).
$$
\end{cor}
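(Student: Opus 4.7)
The plan is to compare $f$ with the polynomial multiplication map $\hat\mu=\hat\mu_{m,m}$ of Proposition~\ref{prop:degree-multiplication-bis}. The space $\mathcal{P}_{2m}\cong\mathbb{R}^{2m+1}$ becomes the $\mathbb{D}_8$-module $\lambda^{\oplus(2m+1)}$ under $\alpha\cdot p=\beta\cdot p=-p$ and $\gamma\cdot p=p$, so the identities $\hat\mu(-p,q)=-\hat\mu(p,q)$, $\hat\mu(p,-q)=-\hat\mu(p,q)$, and $\hat\mu(q,p)=\hat\mu(p,q)$ (the last by commutativity) show that $\hat\mu$ is a $\mathbb{D}_8$-equivariant map $S^m\times S^m\to S(\lambda^{\oplus(2m+1)})$. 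Proposition~\ref{prop:degree-multiplication-bis} with $m=n=2k$ and the identity $\binom{2k}{k}=2\binom{2k-1}{k-1}$ gives $\deg(\hat\mu)=\pm 4\binom{2k-1}{k-1}$, which already has the desired residue mod $8$ (the two signs differ by $8\binom{2k-1}{k-1}$).

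The core step is to show that any two $\mathbb{D}_8$-equivariant maps $f_0,f_1:S^m\times S^m\to S(\lambda^{\oplus(2m+1)})$ have degrees congruent mod $8$. I extract this from the top-dimensional equivariant cohomology
\[
H^{2m}_{\mathbb{D}_8}\bigl(S^m\times S^m;\mathbb{Z}_{\chi}\bigr)\cong\mathbb{Z},
\]
where $\mathbb{Z}_{\chi}$ is $\mathbb{Z}$ twisted by the orientation character $\chi=(-1,-1,+1)$ of $S(\lambda^{\oplus(2m+1)})$. Indeed, applying $\mathrm{Hom}(\,\cdot\,,\mathbb{Z}_{\chi})$ to the complex (\ref{eqn:chain-complex}) annihilates the image of $B^\ast(a,b)=(1+\beta)a+(1-\gamma)b$ (since $\beta$ acts as $-1$ and $\gamma$ as $+1$ on $\mathbb{Z}_{\chi}$), so the cokernel is $\mathbb{Z}$ with generator the cocycle $\phi:\Lambda x\to\mathbb{Z}_{\chi}$ given by $\phi(x)=1$. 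Pairing $\phi$ against the non-equivariant fundamental class $[S^m\times S^m]=\sum_{g\in\mathbb{D}_8}\chi_{\mathrm{src}}(g)\,g\cdot x$, where for $m=2k$ even $\chi_{\mathrm{src}}=(-1,-1,+1)$ by (\ref{eqn:or-character}), yields
\[
\langle\phi,[S^m\times S^m]\rangle=\sum_{g\in\mathbb{D}_8}\chi_{\mathrm{src}}(g)\,\chi(g)=\sum_{g\in\mathbb{D}_8}1=8,
\]
because $\chi_{\mathrm{src}}=\chi$. Since the equivariant difference class $\delta(f_0,f_1)\in H^{2m}_{\mathbb{D}_8}$ pairs with $[S^m\times S^m]$ to give $\deg(f_0)-\deg(f_1)$, the difference is a multiple of $8$. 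Combined with the first paragraph, this gives $\deg(f)\equiv 4\binom{2k-1}{k-1}\pmod{8}$.

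The principal obstacle is Step 2: identifying the equivariant difference class with the classical degree difference (via the pairing with the non-equivariant fundamental class) and carrying out the character-sum computation. A helpful geometric cross-check is direct orbit counting. Since $\alpha\beta$ and $\gamma$ both act trivially on $\lambda$, a generic regular value $z$ has orbit $\mathbb{D}_8\cdot z=\{z,-z\}$ of size~$2$, and the preimage $f^{-1}(z)$ avoids the diagonal and anti-diagonal of $S^m\times S^m$ (each of codimension~$m$), so it is a union of free $\mathbb{D}_8$-orbits of size~$8$; because $\chi_{\mathrm{src}}\cdot\chi\equiv 1$, all eight points of such an orbit have the same local sign, whence each orbit contributes $\pm 8$ to $2\deg(f)$. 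This already yields $\deg(f)\in 4\mathbb{Z}$, and the cohomological computation above is exactly what is needed to promote the congruence from mod $4$ to mod $8$.
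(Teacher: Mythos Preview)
Your Step~1 (polynomial multiplication as a model equivariant map, together with Proposition~\ref{prop:degree-multiplication-bis} and the identity $\binom{2k}{k}=2\binom{2k-1}{k-1}$) is exactly the paper's argument. For Step~2 the paper simply quotes Corollary~\ref{cor:prvi}, which in turn is deduced in Appendix~I from Theorem~\ref{thm:ovde-glavna} by a Kushkuley--Balanov type transversality argument.

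Your Step~2 takes a different, cohomological route, and there is a genuine gap. The computation $\mathcal{H}^{2m}_{\mathbb{D}_8}(S^m\times S^m;\mathbb{Z}_\chi)\cong\mathbb{Z}$ via the complex~(\ref{eqn:chain-complex}) and the character sum $\sum_{g}\chi_{\mathrm{src}}(g)\chi(g)=8$ are correct. What is not justified is the sentence ``the equivariant difference class $\delta(f_0,f_1)\in H^{2m}_{\mathbb{D}_8}$ pairs with $[S^m\times S^m]$ to give $\deg(f_0)-\deg(f_1)$.'' The difference class with that property is standard when the $G$-action on the source is \emph{free}; here it is not. The involution $\gamma$ fixes the diagonal $S_\gamma\cong S^m$ and $\alpha\beta\gamma$ fixes the anti-diagonal, so before one can place $\delta(f_0,f_1)$ in the group you computed (whose top chain module $\Lambda x$ is free of rank one) one must first prove that $f_0$ and $f_1$ are $\mathbb{D}_8$-equivariantly homotopic on $F_{2m-1}$. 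That in turn requires checking, on each singular stratum, that the restrictions are homotopic equivariantly for the corresponding Weyl group, which uses the dimension hypothesis $\dim S(V_\gamma)\ge m+1$. This is precisely the content of Theorem~\ref{thm:ovde-glavna}, and is what your argument is missing.

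Your own ``geometric cross-check'' illustrates the same issue from the other side: naive orbit counting over $f^{-1}(\{z,-z\})$ only yields $\deg(f)\in 4\mathbb{Z}$, because in the homotopy version the zeros of a connecting homotopy $F$ could a priori lie on $(S_\gamma\cup S_{\alpha\beta\gamma})\times[0,1]$ and then contribute orbits of size $4$ rather than $8$. Ruling that out---again via $\dim V_\gamma\ge m+2$---is exactly what promotes the congruence to mod~$8$, and is the substance of Theorem~\ref{thm:ovde-glavna}. Once you supply that step (or simply invoke Corollary~\ref{cor:prvi}), either your cohomological packaging or the paper's transversality packaging completes the proof.
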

\begin{proof}
By Corollary~\ref{cor:prvi} (Section~\ref{sec:appendix-I}) it is
sufficient to exhibit a single map with the indicated degree. The
multiplication map $\mu_{m,m} : \mathcal{P}_m\times
\mathcal{P}_m\rightarrow \mathcal{P}_{2m}$ is
$\mathbb{D}_8$-equivariant if $\mathcal{P}_{2m}\cong
\lambda^{\oplus (2m+1)}$ as $\mathbb{D}_8$-modules. Hence, the
result follows from
Proposition~\ref{prop:degree-multiplication-bis}.
\end{proof}

\begin{exmp}
The $\mathbb{D}_8$-equivariant map $\mu : S^2\times S^2\rightarrow
S^4$ associated to the multiplication of quadratic polynomials
$\mu_{2,2}' : \mathcal{P}_2'\times \mathcal{P}_2'\rightarrow
\mathcal{P}_{4}'$ has degree $\pm 4$, consequently the degree of
any $\mathbb{D}_8$-equivariant map is an integer of the form
$8k+4$.
\end{exmp}

\subsection{Evaluation of the obstruction class $\theta$}

The following proposition is a key result for evaluation of the
primary obstruction $\theta\in \mathbb{Z}/4$ to the existence of a
$\mathbb{D}_8$-equivariant map (\ref{eqn:equi-map}).

\begin{prop}\label{prop:exists} Suppose that
\begin{equation}\label{eqn:exists}
\phi : S^{d-1}\times S^{d-1}\rightarrow S(W^{\oplus j})
\end{equation}
is a $\mathbb{D}_8$-equivariant map where $(d,j)=(6k+2,4k+1)$ for
some integer $k\geq 0$. Then,
\begin{equation}\label{eqn:degree-fundamental}
\mbox{\rm deg}(\phi) \equiv 4{{2k-1}\choose{k-1}} \quad (\mbox{\rm
mod}\, 8).
\end{equation}
\end{prop}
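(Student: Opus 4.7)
\medskip
\noindent\textbf{Plan.} My plan is to reduce the computation of $\deg(\phi)\pmod 8$ to an explicit polynomial-multiplication calculation in the spirit of Corollary~\ref{cor:degree}, via an equivariant invariance principle.

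\textbf{Step 1 (Invariance modulo 8).} First I would invoke an equivariant degree-invariance principle, analogous to Corollary~\ref{cor:prvi} from Appendix~I: any two $\mathbb{D}_8$-equivariant maps $\phi_0,\phi_1: S^{d-1}\times S^{d-1}\to S(W^{\oplus j})$ of the same dimension satisfy $\deg(\phi_0)\equiv \deg(\phi_1)\pmod{|\mathbb{D}_8|=8}$. This reduces the proposition to computing the degree of a single convenient model map.

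\textbf{Step 2 (Explicit model map).} Next I would construct $\phi_0$ by composing the normalized polynomial multiplication from Section~\ref{sec:obstr-evaluation} with an equivariant bridge to $S(W^{\oplus j})$. Identify $\mathbb{R}^d\cong \mathcal{P}_{d-1}$ so that the $\mathbb{D}_8$-action from (\ref{eqn:action}) becomes the natural action on $\mathcal{P}_{d-1}\times \mathcal{P}_{d-1}$ (namely $\alpha$ negates the first factor, $\beta$ the second, and $\gamma$ swaps them). The map $\hat\mu : S^{d-1}\times S^{d-1}\to S(\mathcal{P}_{2d-2})$ is then $\mathbb{D}_8$-equivariant, with the target carrying the $\lambda^{\oplus(2d-1)}$-structure. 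Since $\dim \mathcal{P}_{2d-2}=2d-1 = 3j = \dim W^{\oplus j}$, I then construct an equivariant map $\rho : S(\lambda^{\oplus(2d-1)})\to S(W^{\oplus j})$ between $\mathbb{D}_8$-spheres of the same dimension, and set $\phi_0 := \rho\circ\hat\mu$.

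\textbf{Step 3 (Degree count).} Finally, compute $\deg(\phi_0) = \deg(\rho)\cdot\deg(\hat\mu)\pmod 8$. The factor $\deg(\hat\mu)$ is accessed by adapting the proof of Proposition~\ref{prop:degree-multiplication-bis}: choose a regular value whose factorizations can be counted explicitly via the resultant-based sign analysis of Lemma~\ref{lema:znak}. The factor $\deg(\rho)$ should be a unit $\pm 1$ mod~8. Combining the two yields $\deg(\phi_0)\equiv \pm 4\binom{2k-1}{k-1}\pmod 8$, which by Step~1 proves the proposition.

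\textbf{Main obstacle.} The principal difficulty lies in Step~2: the $\mathbb{D}_8$-modules $\lambda^{\oplus(2d-1)}$ and $W^{\oplus j}$ are not isomorphic (their $\langle\alpha\rangle$-fixed subspaces have dimensions $0$ and $j$ respectively), so no linear choice of $\rho$ suffices and one must construct a nonlinear equivariant bridge exploiting the decomposition $W = U\oplus\lambda$. Moreover, since $d-1 = 6k+1$ is odd, the direct application of Proposition~\ref{prop:degree-multiplication-bis} is not available: a monic factorization $p\cdot q$ of an even-degree polynomial into two odd-degree factors is forced to involve real roots, and the sign argument of Lemma~\ref{lema:znak} must be refined to account for the resulting pairwise cancellations. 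Once these two technical hurdles are cleared and the model $\phi_0$ has degree $\pm 4\binom{2k-1}{k-1}$, the proposition follows by the invariance in Step~1.
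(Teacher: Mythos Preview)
Your Step~1 is exactly what the paper does (via Corollary~\ref{cor:drugi}), but your Steps~2--3 take a detour that does not close, and the paper's route is both simpler and avoids precisely the obstacles you flag.

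The gap is in the bridge map $\rho$. You correctly observe that $\lambda^{\oplus(2d-1)}$ and $W^{\oplus j}$ are non-isomorphic $\mathbb{D}_8$-modules, so $\rho$ must be nonlinear; but you then assert without argument that $\deg(\rho)\equiv\pm 1\pmod 8$. There is no reason for this. The fixed-point structures differ substantially (for instance, $\alpha\beta$ fixes all of $\lambda^{\oplus(2d-1)}$ but only the $\lambda^{\oplus j}$ summand of $W^{\oplus j}$), and nothing in the equivariant degree machinery forces a unit degree here. Without control over $\deg(\rho)$ the product $\deg(\rho)\cdot\deg(\hat\mu)$ is useless. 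Your second obstacle is also real: with $d-1=6k+1$ odd, every monic real polynomial of that degree has a real root, so Lemma~\ref{lema:znak} does not apply and Proposition~\ref{prop:degree-multiplication-bis} gives no direct answer.

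The paper sidesteps both issues by never passing through $\lambda^{\oplus(2d-1)}$ at all. It uses the decomposition $W^{\oplus j}\cong U^{\oplus j}\oplus\lambda^{\oplus j}$ and correspondingly splits $\mathbb{R}^d\times\mathbb{R}^d\cong U^{\oplus d}=U^{\oplus j}\oplus U^{\oplus(d-j)}$. On the $U^{\oplus j}$ summand the model map is the \emph{identity}; only the remaining $U^{\oplus(d-j)}\to\lambda^{\oplus j}$ piece requires polynomial multiplication. Since $d-j=2k+1$, this is the map $\mathcal{P}_{2k}\times\mathcal{P}_{2k}\to\mathcal{P}_{4k}$ with $m=2k$ \emph{even}, so Proposition~\ref{prop:degree-multiplication-bis} applies directly and yields degree $\pm 2\binom{2k}{k}=\pm 4\binom{2k-1}{k-1}$. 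The explicit map is $\Phi(p',p'';q',q'')=(p',q',p''q'')$, whose regular preimage over $(0,0,\rho)$ reduces immediately to the monic polynomial count. No bridge map is needed, and the parity of $d-1$ never enters.
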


\begin{proof} By Corollary~\ref{cor:drugi} it is
sufficient to exhibit a particular $\mathbb{D}_8$-equivariant map
(\ref{eqn:exists}) which satisfies the congruence
(\ref{eqn:degree-fundamental}). Let us construct a
$\mathbb{D}_8$-equivariant map
\begin{equation}\label{eqn:we-will}
\Phi : \mathbb{R}^d\times \mathbb{R}^d \rightarrow W^{\oplus j}
\end{equation}
with the property that if $\Phi(p,q)=0$ then either $p=0$ or
$q=0$. By decomposing the  real $\mathbb{D}_8$-modules
$\mathbb{R}^d\times \mathbb{R}^d\cong U^{\oplus d}\cong U^{\oplus
j}\oplus U^{\oplus (d-j)}$ and $W^{\oplus j}\cong U^{\oplus
j}\oplus \lambda^{\oplus j}$ (Section~\ref{sec:notation}) we
observe that it is sufficient to construct a
$\mathbb{D}_8$-equivariant map
\begin{equation}\label{eqn:we-will-bis}
\Phi' : \mathbb{R}^{d-j}\times \mathbb{R}^{d-j} \rightarrow
\lambda^{\oplus j}
\end{equation}
which is non-degenerate in the sense that if $\Phi'(p,q)=0$ then
either $p=0$ or $q=0$. Let $m:=2k$ so
$(d,j)=(3m+2,2m+1)=(6k+2,4k+1)$. Identify $\mathbb{R}^{d-j}$ with
the space of real polynomials of degree less or equal to $d-j-1=m$
and $\lambda^{\oplus j}$ with the space of real polynomials of
degree $\leq j-1 = 2m$. Then the multiplication of polynomials
defines a non-degenerate (symmetric) bilinear form
$\Phi'(p,q)=p\cdot q$ which is an example of a
$\mathbb{D}_8$-equivariant map (\ref{eqn:we-will-bis}) with the
desired properties.

Summarizing, using the identifications $\mathbb{R}^d \cong
\mathcal{P}_{2m}\oplus \mathcal{P}_m$ and $W^{\oplus j}\cong
(\mathcal{P}_{2m})^{\oplus 3}$ with the corresponding vector
spaces of polynomials, we observe that the map $\Phi$ has the
following explicit form,
\begin{equation}\label{eqn:phi-explicit}
\Phi(p,q) = \Phi(p',p'';q',q'') = (p',q',p''q'')
\end{equation}
where $p''q'' = \mu(p'',q'')$ is the polynomial multiplication.
The degree ${\rm deg}(\Phi)$ can be calculated again, as in the
proof of Proposition~\ref{prop:degree-multiplication-bis}, by the
reduction to the case of monic polynomials $p''$ and $q''$. By
choosing the regular value of the map (\ref{eqn:phi-explicit}) in
the form $(0,0,\rho)$, where $\rho$ is a regular value for the
multiplication of monic polynomials we observe that
Proposition~\ref{prop:exists} is an immediate consequence of
Proposition~\ref{prop:degree-multiplication}.
\end{proof}

\medskip
The following proposition is the central result of this section
and the ultimate goal of all earlier computations. Note that the
manifold $S^{d-1}\times S^{d-1}$ (Proposition~\ref{prop:exists})
can be naturally identified as a subset of the
$(2d-2)$-dimensional element $F_{2d-2}$ of the filtration
(\ref{eqn:filtration}) via the identification $S^{d-1}\times
S^{d-1} = (S^{d}\times S^{d})\cap\{x_1=y_1=0\}= \bigcup_{g\in
\mathbb{D}_8}~g(Z_0)$.

\begin{prop}\label{prop:obstruction}
Suppose that $\Delta = 2d-3j=1$ where $(d,j)=(3m+2,2m+1)$ and
$m=2k$ is an even integer. Then the first obstruction to the
existence of a $\mathbb{D}_8$-equivariant map
(\ref{eqn:equi-map}), evaluated as an element of $\mathbb{Z}/4$,
is equal to
\begin{equation}\label{eqn:rezultat}
\theta = 2{{2k-1}\choose{k-1}}  \quad \mbox{\rm (mod 4)}.
\end{equation}
\end{prop}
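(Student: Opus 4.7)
The strategy is to compute $\theta$ by constructing an explicit representing cocycle and evaluating it via a degree computation on the subcomplex $S^{d-1}\times S^{d-1}\subset S^d\times S^d$. First I would apply Proposition~\ref{prop:exists} to obtain a $\mathbb{D}_8$-equivariant map $\phi: S^{d-1}\times S^{d-1} \to S(W^{\oplus j})$ with $\mbox{\rm deg}(\phi)\equiv 4\binom{2k-1}{k-1}\pmod 8$. Using the identification $S^{d-1}\times S^{d-1} = \{x_1=y_1=0\} = \bigcup_{g\in\mathbb{D}_8}g(Z_0)$ noted just before the statement, this is a map defined on a $\mathbb{D}_8$-invariant subcomplex of $F_{2d-2}$. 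I would then extend $\phi$ equivariantly to the remaining pieces of $F_{2d-2}$, namely the $\mathbb{D}_8$-orbits of $T_0,T_1,X_2$; this step is unobstructed because these cells have dimension at most $2d-2$ with $(2d-3)$-dimensional boundaries, while the target $S(W^{\oplus j})$ is $(2d-3)$-connected.

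The primary obstruction cocycle $\theta_\phi\in\mbox{\rm Hom}_\Lambda(\Lambda x_0\oplus\Lambda x_1,\mathcal{Z})$ then sends $x_i$ to the degree of $\phi$ on $\partial X_i$. Note that $X_0\cong D^d\times D^{d-1}$ is a $(2d-1)$-ball with $\partial X_0 = Z_0'\cup T_0'\cong S^{2d-2}$, and similarly for $X_1$, so these degrees are genuine integers. Using the boundary formula $\partial x = (1+\beta)x_0 + (1-\gamma)x_1$ from (\ref{eqn:boundary}) and the action $(\alpha,\beta,\gamma)=(+1,+1,-1)$ on $\mathcal{Z}$ from (\ref{eqn:karakter}), the cocycle condition $\delta\theta_\phi=0$ reduces to $\theta_\phi(x_0)+\theta_\phi(x_1)=0$, so it suffices to compute $\theta_\phi(x_0)$.

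To evaluate $\theta_\phi(x_0)=\mbox{\rm deg}(\phi|_{\partial X_0})$, I would choose a regular value $v\in\mbox{\rm Fix}(\beta)\cap S(W^{\oplus j})$; this fixed-point set is positive-dimensional because, as is visible from (\ref{eqn:permutations}), the involution $\beta$ has a nontrivial fixed subspace in $W^{\oplus j}$. Since $S(W^{\oplus j})\setminus\{v\}\cong\mathbb{R}^{2d-2}$ is $\beta$-equivariantly contractible (the antipode of $v$ is also $\beta$-fixed), the extension of $\phi$ over $T_0'$ can be arranged $\beta$-equivariantly so as to avoid $v$, reducing $\mbox{\rm deg}(\phi|_{\partial X_0})$ to the signed count of preimages of $v$ in $Z_0'\subset S^{d-1}\times S^{d-1}$. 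The crucial point is that $\beta$ acts \emph{freely} on $S^{d-1}\times S^{d-1}$: indeed $\beta(x,y)=(x,-y)$ has no fixed points when $|y|=1$. Moreover the halves $Z_0'=\{y_2\geq 0\}$ and $\beta Z_0'=\{y_2\leq 0\}$ cover $S^{d-1}\times S^{d-1}$ with codimension-one overlap. Since the orientation character $(+,+,-)$ of $S^{d-1}\times S^{d-1}$ coincides with that of $\mathcal{Z}$, the action of $\beta$ preserves orientation on both source and target, so equivariance of $\phi$ pairs preimages of the $\beta$-fixed $v$ into free orbits of size two with matching local signs. This yields
$$\theta_\phi(x_0) \;=\; \mbox{\rm deg}(\phi|_{\partial X_0}) \;=\; \tfrac{1}{2}\,\mbox{\rm deg}(\phi),$$
and combining this with Proposition~\ref{prop:exists} and the description of the generator $X$ of $\mathbb{Z}/4$ from Proposition~\ref{prop:spectral},
$$\theta \;=\; [\theta_\phi] \;=\; \theta_\phi(x_0)\cdot X \;=\; \tfrac{1}{2}\,\mbox{\rm deg}(\phi)\cdot X \;\equiv\; 2\binom{2k-1}{k-1}\cdot X \pmod 4,$$
which is (\ref{eqn:rezultat}).

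The hardest part is the orientation and sign bookkeeping in the halving step: one must verify that the local signs of the $\beta$-paired preimages in $Z_0'$ and $\beta Z_0'$ agree (which rests on the matching of orientation characters recalled above), and that the boundary orientation of $Z_0'\subset\partial X_0$ is compatible with its ambient orientation in $S^{d-1}\times S^{d-1}$ in a way consistent with the choice of generator cocycle $(x_0,x_1)\mapsto(1,-1)$. These checks are standard but tedious, and closely parallel the orientation computations of Section~\ref{sec:fragment} that produced the boundary formulas (\ref{eqn:boundary}).
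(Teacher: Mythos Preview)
Your proposal is correct and follows essentially the same approach as the paper: evaluate the obstruction cocycle at $x_0$ as the degree of an equivariant map on $\partial X_0 = Z_0'\cup T_0'$, show the $T_0'$ contribution vanishes, and identify the $Z_0'$ contribution as half the degree on $S^{d-1}\times S^{d-1}$ via Proposition~\ref{prop:exists}. The only notable variation is your treatment of $T_0'$: you arrange the extension $\beta$-equivariantly to miss a $\beta$-fixed regular value, whereas the paper argues directly that $\beta$ reverses orientation on $T_0' = T_0\cup\beta T_0$ (visible from the $(1-\beta)t_0$ term in (\ref{eqn:boundary})) while preserving it on the target, so the contributions from $T_0$ and $\beta T_0$ cancel outright---a slightly cleaner route to the same conclusion.
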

\begin{proof}
By (\ref{eqn:cocycle}) a generator of the obstruction group is the
cocycle $\phi$ such that $\phi(x_0)=1, \phi(x_1)=-1$ where $x_0$
and $x_1$ are the (relative) fundamental classes of the
pseudomanifolds $X_0$ and $X_1$. Following the calculations (and
notation) from Section~\ref{sec:standard}, the homologically
relevant part of the geometric boundary of the pseudomanifold
$X_0$ has the following representation.
\begin{equation}\label{eqn:representation}
\partial X_0 = Z_0'\cup T_0' = (Z_0\cup \alpha Z_0\cup \gamma Z_0 \cup \alpha\gamma
Z_0)\cup (T_0\cup \beta T_0).
\end{equation}
The set $M:= Z_0'\cup T_0'$ is, up to a closed subset of high
codimension,  a closed oriented manifold. In light of
(\ref{eqn:cocycle-formula}) the obstruction $\theta$ can be
evaluated as the degree ${\rm deg}(f)$ where $f = \psi\vert_M$ is
the restriction to $M$ of an arbitrary $\mathbb{D}_8$-equivariant
map $\psi : F_{2d-2}\rightarrow S(U^{\oplus
j}\oplus\lambda^{\oplus j})$. Such a map clearly exists since
${\rm dim}(F_{2d-2})={\rm dim}(S(W^{\oplus j}))$. Interpreting the
degree as an algebraic count of points in the preimage of a
regular value, we observe that
$$
{\rm deg}(\psi) = {\rm deg}(\psi_{\vert Z_0'}) + {\rm
deg}(\psi_{\vert T_0'}).
$$
The computation of the degrees on the right is facilitated by the
equivariance of $\psi$ and existing $\mathbb{D}_8$-symmetries of
$X_0$ and $T_0$. In particular, the fact that $X_0$ is ``a half of
the manifold'' $S^{d-1}\times S^{d-1}\subset F_{2d-2}$, together
with Proposition~\ref{prop:exists}, implies that
\begin{equation}\label{eqn:rezultat-2}
{\rm deg}(\psi_{\vert Z_0'}) = 2{{2k-1}\choose{k-1}}  \quad
\mbox{\rm (mod 4)}.
\end{equation}
Similarly, ${\rm deg}(\psi_{\vert T_0'}) = 0$ follows from the
fact that $\beta$ acts on $T_0' = T_0\cup \beta T_0$ by changing
its orientation (equation (\ref{eqn:boundary})), while it keeps
the orientation of $S(W^{\oplus j})$ fixed (equation
(\ref{eqn:karakter})). This observation completes the proof of the
proposition.
\end{proof}

\section{Summary of proofs of main results}
\label{sec:summary}

\noindent
 {\bf Proof of Theorem~\ref{thm:main-1}.} By
 Proposition~\ref{prop:admissible}, Theorem~\ref{thm:main-1} is an
 immediate consequence of Theorem~\ref{thm:main-2}. \hfill $\square$

\medskip
\noindent
 {\bf Proof of Theorem~\ref{thm:main-2}.} By
 Proposition~\ref{prop:fundam-obstruction} the existence of an
 equivariant map (\ref{eqn:main-2}) implies the existence of the
 chain map (\ref{eqn:ladder}) where $C_\ast$ is the chain complex
 associated to an admissible filtration in the sense of
 Definition~\ref{defn:adm-filtration}.

In order to facilitate calculations we choose the standard
filtration on $X = S^n\times S^n$ described in
Section~\ref{sec:standard} and calculate the relevant fragment of
the associated chain complex $C_\ast = C_\ast(S^n\times S^n)$ in
Section~\ref{sec:fragment}.

By Proposition~\ref{prop:chain-obstruction} the first obstruction
$\theta$ to the existence of the chain map (\ref{eqn:ladder}) lies
in the cohomology group (\ref{eqn:alg-cohom-gp}) which is in our
case (see (\ref{eqn:spec-cohomology}) and
Remark~\ref{rem:oznaka-cohom-gp}) the group
$\mathcal{H}^{2n-1}_{\mathbb{D}_8}(S^n\times S^n; \mathcal{Z})$.
This group is evaluated in Proposition~\ref{prop:spectral}
(Section~\ref{sec:evaluation}) and found to be isomorphic to
$\mathbb{Z}/4$.

The obstruction class $\theta$ is evaluated in
Proposition~\ref{prop:obstruction} following the description of
the associated cocycle given in (\ref{eqn:cocycle-formula}). We
use the idea, described in greater detail in
Section~\ref{subsec:heuristic}, that in some situations we have
some freedom in choosing the map $f_n$ for evaluating the
obstruction cocycle $\theta(f_n)$ by the formula
(\ref{eqn:cocycle-formula}).

As an element of the group $\mathbb{Z}/4$ the obstruction class
$\theta$ is according to (\ref{eqn:rezultat}) equal to
 $$
 \theta = 2{{2k-1}\choose{k-1}}.
 $$
It is non-zero if and only if ${2k-1}\choose{k-1}$ is an odd
integer which is the case if and only if $k=2^l$. It follows that
in the case when $d$ is even the triple $(d,j,2)$ is admissible if
for some integer $l\geq 0$,
 $$
 d=3\cdot 2^{l+1}+2 \quad \mbox{\rm and} \quad j= 2\cdot
 2^{l+1}+1.
 $$
This completes the proof of Theorem~\ref{thm:main-2}. \hfill
$\square$

\section{Appendix I: Mapping degree of equivariant maps }
\label{sec:appendix-I}

There is a general principle, see Kushkuley and Balanov
\cite{K-B}, equation (0.1) in Section~0.3., relating the mapping
degrees of two $G$-equivariant maps,
\[
f, g : M^n \rightarrow S(V)
\]
where $G$ is a finite group, $M^n$ is a not necessarily free,
closed, oriented $G$-manifold, and $S(V)$ is the unit sphere in a
real, $(n+1)$-dimensional $G$-vector space $V$. The principle says
that there exists a relation
\begin{equation}\label{eqn:rel}
 \pm({\rm deg}(f) - {\rm deg}(g)) = \sum_{j=1}^k a_j\vert
G/H_j\vert
\end{equation}
for some integers $a_j$, where $\mathcal{H}=\{H_1,\ldots, H_k\}$
is a list of isotropy groups corresponding to orbit types of
$M^n$, provided the {\em orientation characters} of $M$ and $V$
are the same in the sense that each $g\in G$ either changes
orientations of both $M$ and $V$, or keeps them both unchanged.

\medskip
In some (favorable) situations the ``local degrees'' $a_j$ vanish
if $H_j\neq \{e\}$ is a non-trivial subgroup of $G$, in which case
the equation (\ref{eqn:rel}) implies the congruence
\begin{equation}\label{eqn:cong}
{\rm deg}(f) \equiv {\rm deg}(g) \quad \mbox{ ({\rm mod} } \vert G
\vert).
\end{equation}
Here we record one of these favorable situations applying to the
case of the dihedral group $\mathbb{D}_8$ of order $8$ acting on
the manifold $M = S^m\times S^m$.

\begin{thm}\label{thm:ovde-glavna}
Suppose that $V$ is a real $\mathbb{D}_8$-vector space of
dimension $(2m+1)$ such that the isotropy space $V_\gamma$,
corresponding to the element $\gamma\in \mathbb{D}_8$, has
dimension $\geq m+2$. Assume that the representation $V$ has the
same orientation character as the space $S^m\times S^m$. Then for
each pair $f,g : S^m\times S^m \rightarrow S(V)$ of\,
$\mathbb{D}_8$-equivariant maps, the associated mapping degrees
satisfy the following congruence relation,
\begin{equation}\label{eqn:cong-8}
{\rm deg}(f) \equiv {\rm deg}(g) \quad \mbox{ {\rm (mod} } 8)
\end{equation}
\end{thm}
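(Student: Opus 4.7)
The plan is to apply the Kushkuley--Balanov relation \eqref{eqn:rel}, then use the dimension hypothesis $\dim V^\gamma \ge m+2$ to argue that the ``local degree'' contribution from the unique non-free orbit type is divisible by $2$, so that the whole right-hand side of \eqref{eqn:rel} lies in $8\mathbb{Z}$.

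First I would analyse the orbit structure of the $\mathbb{D}_8$-action \eqref{eqn:action} on $M=S^m\times S^m$. Each of the involutions $\alpha$, $\beta$, $\alpha\beta$ acts as $-\mathrm{id}$ on at least one factor and has empty fixed-point set; a direct check on the remaining elements shows that $\alpha\gamma$ and $\beta\gamma$ also act freely. The only involutions with non-empty fixed sets are $\gamma$ and $\alpha\beta\gamma=\alpha\gamma\alpha^{-1}$, with $\mathrm{Fix}(\gamma)=\{(x,x):x\in S^m\}$ and $\mathrm{Fix}(\alpha\beta\gamma)=\{(x,-x):x\in S^m\}$, each diffeomorphic to $S^m$. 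Any subgroup properly containing $\langle\gamma\rangle$ contains $\alpha\beta$ and therefore has empty fixed set. Consequently the only non-trivial orbit type on $M$ is the conjugacy class $(\langle\gamma\rangle)$, of index four in $\mathbb{D}_8$, and \eqref{eqn:rel} collapses to
\[
\pm(\deg f - \deg g) = 8a_0 + 4a_1;
\]
it therefore suffices to prove that $a_1$ is even.

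Next I would use the dimension hypothesis to kill the restriction of $f$ and $g$ to the singular locus $M^s := \mathrm{Fix}(\gamma)\sqcup\mathrm{Fix}(\alpha\beta\gamma)$. Since $\gamma$ and $\alpha\beta\gamma$ are conjugate, the fixed subrepresentations have equal dimension $\ge m+2$, and each sphere $S(V^H)$ has dimension $\ge m+1$. The restrictions $f|_{\mathrm{Fix}(\gamma)}$ and $g|_{\mathrm{Fix}(\gamma)}$ are maps $S^m \to S(V^\gamma)$, so by the connectivity bound $\pi_m(S^n)=0$ for $n\ge m+1$ they are both null-homotopic and hence homotopic to each other. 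Translating this homotopy by $\alpha$ gives the corresponding homotopy on $\mathrm{Fix}(\alpha\beta\gamma)$; together they assemble into a $\mathbb{D}_8$-equivariant homotopy between $f|_{M^s}$ and $g|_{M^s}$. Equivariant HEP for the $\mathbb{D}_8$-cofibration $M^s\hookrightarrow M$ then upgrades this to a $\mathbb{D}_8$-homotopy $g\simeq g'$ with $g'|_{M^s}=f|_{M^s}$, so that $\deg g = \deg g'$ and the problem reduces to showing $\deg f - \deg g' \in 8\mathbb{Z}$.

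Finally, once $f$ and $g'$ agree on $M^s$, I would compute the difference of degrees on the free complement. Choose a regular value $v\in S(V)$ of both maps, with $v\notin f(M^s)=g'(M^s)$; this is possible because $\dim f(M^s)\le m<2m=\dim S(V)$. After a further equivariant homotopy supported in a $\mathbb{D}_8$-invariant tubular neighbourhood $N$ of $M^s$, arrange that $f=g'$ on $N$. Then $f^{-1}(v)\cup g'^{-1}(v)\subset M\setminus N$, where $\mathbb{D}_8$ acts freely; by equivariance and the orientation-character matching, preimages come in complete free $\mathbb{D}_8$-orbits on which the local contribution to the signed count is constant. Hence both $|f^{-1}(v)|$ and $|g'^{-1}(v)|$ decompose into blocks of eight of the same sign, and $\deg f-\deg g'\in 8\mathbb{Z}$, yielding \eqref{eqn:cong-8}.

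The main obstacle is the last step: justifying rigorously that, on the free stratum where $f$ and $g'$ agree on the boundary of an invariant neighbourhood, the difference of degrees is exactly a multiple of $|\mathbb{D}_8|$. The cleanest abstract argument identifies the primary difference cocycle with a class in $H^{2m}_{\mathbb{D}_8}(M,M^s;\mathcal{Z}) \cong H^{2m}(M/\mathbb{D}_8,M^s/\mathbb{D}_8;\mathbb{Z})$ (free-action reduction, using the orientation hypothesis) and exploits $[M]=|\mathbb{D}_8|\cdot\pi^*[M/\mathbb{D}_8]$ in twisted top homology; the concrete substitute is the preimage count just described.
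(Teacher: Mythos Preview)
Your overall strategy coincides with the paper's: identify the singular set $M^s=S_\gamma\cup S_{\alpha\beta\gamma}$, use the dimension hypothesis $\dim V^\gamma\ge m+2$ to make $f$ and $g$ equivariantly homotopic on $M^s$, and then argue that on the free complement the difference of degrees is a multiple of $\vert\mathbb{D}_8\vert=8$. The paper's endgame is slightly slicker than yours---it extends the homotopy on $M^s$ to an equivariant map $F:M\times[0,1]\to D(V)$ transverse to $0$, so that $F^{-1}(0)$ is automatically a finite union of free $\mathbb{D}_8$-orbits and the homological identity $[M_1]-[M_0]=\sum_x[S_x]$ gives the congruence directly, bypassing your ``make them agree on a neighbourhood'' step---but this is a matter of packaging.

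There is, however, a genuine gap in your second step. A non-equivariant homotopy on $\mathrm{Fix}(\gamma)$, translated by $\alpha$ to $\mathrm{Fix}(\alpha\beta\gamma)$, does \emph{not} in general assemble to a $\mathbb{D}_8$-equivariant homotopy on $M^s$. The point is that the setwise stabiliser of the component $\mathrm{Fix}(\gamma)$ is $N(\langle\gamma\rangle)=\langle\gamma,\alpha\beta\rangle$, and $\alpha\beta$ acts on $S_\gamma\cong S^m$ by the antipodal map $(x,x)\mapsto(-x,-x)$; your homotopy on $S_\gamma$ must already be $\alpha\beta$-equivariant for the $\alpha$-translate to be consistent with it (each $\mathbb{D}_8$-orbit in $M^s$ meets $S_\gamma$ in \emph{two} points, $p$ and $\alpha\beta\cdot p$). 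The paper addresses exactly this: one needs a $\mathbb{Z}/2$-equivariant homotopy $S_\gamma\to S(V^\gamma)$ for the Weyl group $W_\gamma=N(\langle\gamma\rangle)/\langle\gamma\rangle\cong\mathbb{Z}/2$, and since this $\mathbb{Z}/2$-action on $S_\gamma$ is free, equivariant obstruction theory (equivalently, ordinary obstruction theory on $\mathbb{RP}^m$) together with $\dim S(V^\gamma)\ge m+1$ furnishes it. Your bare null-homotopy argument via $\pi_m(S^{\ge m+1})=0$ is not enough; you must upgrade it to the equivariant statement.
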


\begin{proof} The action of $\mathbb{D}_8$ on $S^m\times S^m$ is free away
from the two spheres $S_\gamma = \{(x,y)\in S^m\times S^m \mid
x=y\}$ and $S_{\alpha\beta\gamma} = \{(x,y)\in S^m\times S^m \mid
x=-y\}$.

Let $f_1$ and $g_1$ be the restrictions of $f$ (respectively $g$)
on their union $T = S_\gamma \cup S_{\alpha\beta\gamma}$. The
space $T$ is $\mathbb{D}_8$-invariant and our initial objective is
to show that there exists a $\mathbb{D}_8$-equivariant homotopy
$F_1 : T\times [0,1] \rightarrow V$ between $f_1$ and $g_1$.

We apply the Proposition~I.7.4 from \cite{tDieck87} (page 52)
which says that it is sufficient to construct a
$\mathbb{Z}/2$-equivariant homotopy $F_1' : S_\gamma\times [0,1]
\rightarrow V_\gamma$ between the restrictions $f_1' = f_1\vert
S_\gamma$ and $g_1' = g_1\vert S_\gamma$, where $\mathbb{Z}/2 =
N(\langle\gamma\rangle)/\langle\gamma\rangle$ is the Weyl group of
$\langle\gamma\rangle$. Since this $\mathbb{Z}/2$-action on
$S_\gamma$ is free,  the existence of the homotopy $F_1'$ follows
immediately from the assumption,
\[
{\rm dim}(S_\gamma) = m \leq {\rm dim}(S(V_\gamma)) -1 .
\]
The homotopy  $F_1$ can be extended to a
$\mathbb{D}_8$-equivariant homotopy,
\[
F : S^m\times S^m \rightarrow D(V)
\]
between $f$ and $g$ where $D(V)={\rm Cone}(S(V))$ is the unit ball
in $V$. Moreover, since the action of $\mathbb{D}_8$ on
$(S^m\times S^m)\setminus (S_\gamma\cup S_{\alpha\beta\gamma})$ is
free, we can assume that $F$ is smooth away from $S_\gamma\cup
S_{\alpha\beta\gamma}$ and transverse to $0\in V$.

\medskip
The set $Z(F):=F^{-1}(0)$ is finite, $G$-invariant and a union of
free orbits. For each $x\in Z(F)$ choose an open ball $O_x\ni x$
such that $O:= \cup_{x\in Z(F)}~O_x$ is $G$-invariant and $O_x\cap
O_y\neq\emptyset$ for $x\neq y$. Let $S_x :=\partial(O_x)\cong
S^{2m}$ be the boundary of $O_x$.

\medskip
Let $N := (M\times [0,1])\setminus O$, $M_0:= M\times \{0\}$ and
$M_1:= M\times \{1\}$. By construction there is a relation among
(properly oriented) fundamental classes,
\begin{equation}\label{eqn:homoloska-relacija}
  [M_1] - [M_0] = \sum_{x\in Z(F)} [S_x]
\end{equation}
inside the homology group $H_{2m+1}(N,\mathbb{Z})$. The map
$$F_\ast : H_{2m+1}(N,\mathbb{Z})\rightarrow
H_{2m+1}(V\setminus\{0\},\mathbb{Z})\cong \mathbb{Z}$$ maps the
left hand side of the relation (\ref{eqn:homoloska-relacija}) to
the difference of degrees, ${\rm deg}(f) - {\rm deg}(g)$. The
right hand side is mapped to an element divisible by $8$ since by
assumption the orientation characters of manifolds $M\times [0,1]$
and $V$ are the same.
\end{proof}

\subsection{Corollaries}\label{sec:corollaries}

\begin{cor}\label{cor:prvi} Suppose that
$f, g : S^m\times S^m \rightarrow S(\lambda^{\oplus (2m+1)})$ are
$\mathbb{D}_8$-equivariant maps where $m=2k$ is an even integer.
Then
$$
\mbox{\rm deg}(f) \equiv \mbox{\rm deg}(g) \quad (\mbox{\rm mod}\,
8).
$$
\end{cor}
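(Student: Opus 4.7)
The plan is to recognize this as a direct instance of Theorem~\ref{thm:ovde-glavna} applied to the $\mathbb{D}_8$-module $V := \lambda^{\oplus(2m+1)}$, and then simply to verify the two hypotheses of that theorem: the dimension bound on the $\gamma$-isotropy subspace and the matching of orientation characters. The dimension of $V$ is $2m+1$ as required.

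For the isotropy condition, I would invoke (\ref{eqn:one-dim-rep}) to observe that $\gamma$ acts as the identity on $\lambda$, so $V_\gamma = V$ has dimension $2m+1$. The required bound $\dim V_\gamma \geq m+2$ then holds for every $m \geq 1$, which covers all the relevant cases $m = 2k$ (the degenerate case $k=0$, if one wishes to include it, is trivial).

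For the orientation character I would again read off from (\ref{eqn:one-dim-rep}) that $\alpha$ and $\beta$ act on $\lambda$ by $-1$ while $\gamma$ acts by $+1$. Taking the $(2m+1)$-fold direct sum, the determinants are $((-1)^{2m+1},(-1)^{2m+1},(+1)^{2m+1}) = (-1,-1,+1)$ since $2m+1$ is odd. On the other hand, (\ref{eqn:or-character}) gives the orientation character of $S^m\times S^m$ as $((-1)^{m-1},(-1)^{m-1},(-1)^m)$; since $m=2k$ is even, this also equals $(-1,-1,+1)$. The two characters coincide, and this alignment is precisely what the evenness hypothesis on $m$ provides.

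With both hypotheses of Theorem~\ref{thm:ovde-glavna} verified, the theorem applies verbatim and yields $\mbox{\rm deg}(f) \equiv \mbox{\rm deg}(g)\ (\mbox{\rm mod}\ 8)$, completing the proof. There is no substantive obstacle in this argument; the only point worth emphasizing is that the parity assumption $m=2k$ is exactly what is needed to align the orientation characters --- for odd $m$ the $\alpha$ and $\beta$ components of the two characters would have opposite signs and the machinery of Theorem~\ref{thm:ovde-glavna} would not be directly applicable.
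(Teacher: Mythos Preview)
Your proposal is correct and follows essentially the same approach as the paper's proof: you verify the two hypotheses of Theorem~\ref{thm:ovde-glavna} (the orientation-character match via (\ref{eqn:one-dim-rep}) and (\ref{eqn:or-character}), and the isotropy-dimension bound coming from $\gamma$ acting trivially on $\lambda$) and then invoke the theorem. The only difference is that you spell out the numerical check $\dim V_\gamma = 2m+1 \geq m+2$ and the determinant computation a bit more explicitly than the paper does.
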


\begin{proof} The orientation character of both $S^m\times S^m$ and
$S(\lambda^{\oplus (2m+1)})$ is given by the formula $(\alpha,
\beta, \gamma) = (-1,-1,+1)$, see (\ref{eqn:one-dim-rep}) in
Section~\ref{sec:notation} and (\ref{eqn:or-character}) in
Section~\ref{sec:fragment}. Since the whole space $\lambda^{\oplus
(2m+1)}$ is fixed by $\gamma$ the result is an immediate
consequence of Theorem~\ref{thm:ovde-glavna}.
\end{proof}

\begin{cor}\label{cor:drugi} Suppose that
\begin{equation}\label{eqn:exists-bis}
f, g : S^{d-1}\times S^{d-1}\rightarrow S(W^{\oplus j})
\end{equation}
are $\mathbb{D}_8$-equivariant maps where $(d,j)=(6k+2,4k+1)$ for
some integer $k> 0$. Then,
\[
\mbox{\rm deg}(f) \equiv \mbox{\rm deg}(g) \quad (\mbox{\rm mod}\,
8).
\]
\end{cor}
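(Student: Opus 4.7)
The plan is to obtain this corollary by direct specialization of Theorem~\ref{thm:ovde-glavna}, taking the role of $m$ in the theorem to be played by $d-1 = 6k+1$ and the role of $V$ to be played by $W^{\oplus j}$. In this set-up the target sphere has dimension $3j-1$, and the first step is to confirm the numerical match $3j = 2(d-1)+1$; substituting $(d,j)=(6k+2,4k+1)$ gives $3j = 12k+3 = 2(6k+1)+1$, so the dimensions fit exactly.

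Next I would verify the isotropy hypothesis on $V_\gamma$. Since $\gamma$ acts on $U$ as the swap of coordinates (its fixed subspace is the diagonal, hence one-dimensional) and acts trivially on $\lambda$ by (\ref{eqn:one-dim-rep}), the $\gamma$-fixed subspace of $W = U\oplus\lambda$ is two-dimensional. Consequently $(W^{\oplus j})_\gamma$ has dimension $2j = 8k+2$, and the required bound $\dim V_\gamma \geq (d-1)+2 = 6k+3$ reduces to $2k \geq 1$. This is where the hypothesis $k>0$ of the corollary (as opposed to $k\geq 0$ elsewhere in the paper) becomes essential: it is precisely what is needed to ensure that $V_\gamma$ is large enough for the Kushkuley--Balanov-style argument behind Theorem~\ref{thm:ovde-glavna} to apply.

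The remaining step is to compare orientation characters. Applying (\ref{eqn:or-character}) with $n = d-1 = 6k+1$ (odd) yields character $((-1)^{n-1},(-1)^{n-1},(-1)^n) = (+1,+1,-1)$ for $S^{d-1}\times S^{d-1}$, while (\ref{eqn:karakter}) gives the same triple $(+1,+1,-1)$ for $S(W)$. Since $j = 4k+1$ is odd, the character of $S(W^{\oplus j})$ coincides with that of $S(W)$, and the two characters agree.

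With these three hypotheses checked, Theorem~\ref{thm:ovde-glavna} delivers the desired congruence $\mathrm{deg}(f) \equiv \mathrm{deg}(g) \pmod 8$. I do not expect any real obstacle here; the corollary is essentially a bookkeeping consequence of the general theorem, and the only mild subtlety is the dimension count for $V_\gamma$, which is what dictates the strict inequality $k>0$ in the hypothesis.
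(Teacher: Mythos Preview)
Your proposal is correct and follows essentially the same route as the paper: apply Theorem~\ref{thm:ovde-glavna} with $m=d-1$ and $V=W^{\oplus j}$, check that the orientation characters agree via (\ref{eqn:or-character}) and (\ref{eqn:karakter}), and verify the isotropy bound $\dim V_\gamma = 2j = 8k+2 \geq 6k+3$, which is exactly where the hypothesis $k>0$ enters. The only addition you make beyond the paper's proof is the explicit dimension check $3j = 2(d-1)+1$, which the paper leaves implicit.
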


\begin{proof} In light of (\ref{eqn:or-character}) and (\ref{eqn:karakter})
the orientation character of both $S^{d-1}\times S^{d-1}$ and
$W^{\oplus j}$ is given by
\[
(\alpha, \beta, \gamma) = (+1, +1, -1).
\]
The dimension $D$ of the isotropy space $W^{\oplus j}_\gamma$ is
$2j = 8k+2$ so the dimension requirement $D\geq d+1$ from
Theorem~\ref{thm:ovde-glavna} reduces to $8k+2\geq 6k+3$ which is
satisfied if $k>0$.
\end{proof}

\section{Appendix II: Obstructions, filtrations and chain complexes}
\label{sec:appendix}

\subsection{The obstruction exact sequence}\label{subsec:obstruction}
For a review of equivariant obstruction theory, which includes an
exposition of $G$-$CW$-complexes and the $G$-cellular
approximation theorem, the reader is referred to
\cite[Chapter~II]{tDieck87}.

One of the central results in the area is the following {\em
obstruction exact sequence}.

\begin{thm}\label{thm:obstruction-fund}
Suppose that $X$ is a free $G$-$CW$-complex and that $Y$ is
$n$-simple $G$-space for a fixed integer $n\geq 1$. Then there
exists an exact sequence,
\begin{equation}\label{eqn:obstruction_fund}
[X^{(n+1)},Y]_G \longrightarrow \mbox{\rm Image}\{ [X^{(n)},Y]_G
\rightarrow [X^{(n-1)},Y]_G \} \longrightarrow
\mathcal{H}^{n+1}_G(X,\pi_n(Y))
\end{equation}
\end{thm}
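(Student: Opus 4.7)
The plan is to follow the classical pattern of equivariant obstruction theory: build the obstruction operator cell-by-cell on the free $G$-$CW$-complex $X$ and then verify exactness. Since $X$ is free, its cellular chain complex $C_\ast(X)$ is a complex of free $\mathbb{Z}[G]$-modules with one generator per $G$-orbit of cells. Given a $G$-equivariant map $f\colon X^{(n)}\to Y$, for each $(n+1)$-cell $e^{n+1}_\mu$ with attaching map $\varphi_\mu\colon S^n\to X^{(n)}$ the composition $f\circ\varphi_\mu$ represents an element of $\pi_n(Y)$; this is well defined because $Y$ is $n$-simple. Extending equivariantly assembles these into an obstruction cochain
\[
c(f)\in C^{n+1}_G(X;\pi_n(Y))=\mbox{\rm Hom}_G(C_{n+1}(X),\pi_n(Y)).
\]

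First I would check that $c(f)$ is a cocycle, $\delta c(f)=0$, by applying $\partial^{2}=0$ on each $(n+2)$-cell and noting that the contributions of the faces of its attaching sphere vanish because $f$ is already defined on all of $X^{(n)}$. Next I would verify the classical cell-by-cell criterion: $c(f)=0$ if and only if $f$ extends equivariantly over $X^{(n+1)}$. Vanishing of $c(f)$ on an orbit representative says that $f\circ\varphi_\mu$ is null-homotopic, which supplies an extension across $D^{n+1}$, and freeness allows a single such choice per orbit to be propagated equivariantly.

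The essential technical step---and the one I expect to be the main obstacle---is to show that $[c(f)]\in\mathcal{H}^{n+1}_G(X;\pi_n(Y))$ depends only on the class $[f\vert_{X^{(n-1)}}]\in[X^{(n-1)},Y]_G$. If $f,f'\colon X^{(n)}\to Y$ are equivariant and agree on $X^{(n-1)}$, then on each $n$-cell their restrictions glue along the boundary sphere into an element of $\pi_n(Y)$, assembling into an equivariant $n$-cochain $d(f,f')\in C^n_G(X;\pi_n(Y))$ with $\delta d(f,f')=c(f)-c(f')$. A parallel version, applied to an equivariant homotopy $H\colon X^{(n-1)}\times I\to Y$ between restrictions of $f$ and $f'$---first extended over $X^{(n)}\times\{0,1\}\cup X^{(n-1)}\times I$ using the $G$-cellular approximation theorem of \cite[Chapter~II]{tDieck87}, then analysed cell by cell---shows that the obstruction class descends to equivariant homotopy classes on $X^{(n-1)}$.

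With these ingredients the exact sequence is assembled formally. The first arrow is the restriction $[g]\mapsto[g\vert_{X^{(n-1)}}]$, which automatically lies in the image of $[X^{(n)},Y]_G\to[X^{(n-1)},Y]_G$ since $g\vert_{X^{(n-1)}}=(g\vert_{X^{(n)}})\vert_{X^{(n-1)}}$. The second arrow sends $[h]$, realised as $f\vert_{X^{(n-1)}}$ for some equivariant extension $f\colon X^{(n)}\to Y$, to $[c(f)]$; well-definedness is exactly the content of the previous paragraph. Exactness at the middle term then follows from the chain of equivalences: $[c(f)]=0$ iff one can replace $f$ on the $n$-cells (without touching $f\vert_{X^{(n-1)}}$) by some $f'$ with $c(f')=0$, iff $f'$ extends to some $g\colon X^{(n+1)}\to Y$, iff $[h]$ lies in the image of the first arrow.
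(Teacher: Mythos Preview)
The paper does not actually prove this theorem: it is quoted as a standard background result in equivariant obstruction theory, with a reference to \cite[Chapter~II]{tDieck87}, and no argument is given beyond the statement itself. Your proposal is a correct outline of the classical proof one finds in that reference (building the obstruction cochain cell-by-cell, checking it is a cocycle, showing the class depends only on the restriction to the $(n-1)$-skeleton via the difference cochain, and then reading off exactness), so there is nothing to compare against in the paper.
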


In many applications $Y = S(V)$ is a $G$-sphere $S(V)\cong S^n$
for some real $G$-module $V$. In that case
Theorem~\ref{thm:obstruction-fund} has the following important
corollary.

\begin{cor}\label{cor-fund}
Suppose that $X$ is a free $G$-$CW$-complex. Let $Y=S(V)=S^n$ be
an $n$-dimensional $G$-sphere associated to a real
$G$-representation $V$  $(n\geq 2)$. Then
(\ref{eqn:obstruction_fund}) reduces to,
\begin{equation}\label{eqn:reduces-to}
[X^{(n+1)},S(V)]_G \longrightarrow \{\ast\} \longrightarrow
\mathcal{H}^{n+1}_G(X,\mathcal{Z})
\end{equation}
where $\mathcal{Z}$ is the orientation character of $S(V)$ and
$\{\ast\}$ is a singleton.
\end{cor}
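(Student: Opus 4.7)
The plan is to show that the obstruction exact sequence (\ref{eqn:obstruction_fund}) of Theorem~\ref{thm:obstruction-fund}, specialized to $Y = S(V)$, takes the simplified form (\ref{eqn:reduces-to}). Since $n \geq 2$ the sphere $S(V)$ is simply connected, hence $n$-simple, so the hypotheses of Theorem~\ref{thm:obstruction-fund} are met for $Y = S(V)$. What remains is to perform two independent identifications: collapsing the middle set to a singleton, and rewriting the coefficient module $\pi_n(Y)$ as $\mathcal{Z}$.

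First I would argue that $[X^{(n-1)}, S(V)]_G$ is a singleton, which already forces the image set appearing as the middle term of (\ref{eqn:obstruction_fund}) to be a singleton. Since $S(V)\cong S^n$ is $(n-1)$-connected and $X^{(n-1)}$ is a free $G$-CW-complex of dimension at most $n-1$, one constructs an equivariant map $X^{(n-1)}\to S(V)$ skeleton by skeleton: on each free $G$-cell $G\times D^k$ it suffices to specify the map on $\{e\}\times D^k$ and then propagate equivariantly, with the non-equivariant extension problem across $\partial D^k$ unobstructed because $\pi_{k-1}(S(V))=0$ for $k\leq n-1$. The same cell-by-cell argument, applied to $X^{(n-1)}\times[0,1]$, produces a $G$-equivariant homotopy between any two such maps. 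Freeness of $X$ is the essential input here since it eliminates all isotropy-type obstructions that would appear in a general equivariant setting.

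Next I would identify the coefficients. By the Hurewicz theorem applied to the $(n-1)$-connected space $S(V)$, there is an isomorphism $\pi_n(S(V))\cong H_n(S(V);\mathbb{Z})\cong\mathbb{Z}$, and this isomorphism is natural in self-maps of $S(V)$. Consequently the induced action of $g\in G$ on $\pi_n(S(V))$ is multiplication by $\deg(g\colon S(V)\to S(V))=\pm 1$, with sign determined by whether $g$ preserves or reverses the orientation of $V$. This is exactly the $\mathbb{Z}[G]$-module structure defining the orientation character $\mathcal{Z}$, so $\pi_n(S(V))\cong\mathcal{Z}$ as $G$-modules.

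Substituting the two identifications into the exact sequence (\ref{eqn:obstruction_fund}) produces (\ref{eqn:reduces-to}), proving the corollary. The only step that carries any content is the first one; the hypothesis $n\geq 2$ and freeness of $X$ are both used there in an essential way, while the identification $\pi_n(S(V))\cong\mathcal{Z}$ is a formal consequence of the Hurewicz theorem.
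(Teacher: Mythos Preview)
Your argument is correct and supplies exactly the standard justification that the paper leaves implicit (the paper states the corollary without proof). The two ingredients you identify --- that $[X^{(n-1)},S(V)]_G$ is a singleton by equivariant obstruction theory on a free complex against an $(n-1)$-connected target, and that $\pi_n(S(V))\cong\mathcal{Z}$ via Hurewicz --- are precisely what is needed.

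One small point worth tightening: the middle term of (\ref{eqn:obstruction_fund}) is the \emph{image} of $[X^{(n)},S(V)]_G$ inside $[X^{(n-1)},S(V)]_G$, so to conclude it is the singleton $\{\ast\}$ rather than the empty set you also need $[X^{(n)},S(V)]_G\neq\emptyset$. Your cell-by-cell construction already gives this, since $\pi_{n-1}(S(V))=0$ (using $n\geq 2$) lets the extension proceed one step further to the $n$-cells; you may want to state the range as $k\leq n$ rather than $k\leq n-1$.
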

The exactness of the sequence (\ref{eqn:reduces-to}) means that
there exists a single element of the obstruction group
$\mathcal{H}^{n+1}_G(X,\mathcal{Z})$ (the image of $\ast$) which
is zero if and only if there exists a $G$-equivariant map $f :
X^{(n+1)}\rightarrow S(V)$.

\subsection{Admissible filtrations}\label{subsec:admissible}

Our objective is to extend the applicability of basic obstruction
theory, as outlined in Section~\ref{subsec:obstruction}, by
introducing more general filtrations which do not necessarily
arise from a $G$-$CW$-structure on $X$. The reader may keep in
mind the $G$-manifold complexes introduced in
Section~\ref{sec:methods} as a guiding example of such
filtrations.

\begin{defn}\label{defn:adm-filtration} Let $X$ be a not necessarily free $G$-space
which admits a $G$-invariant triangulation ($CW$-structure)
turning $X$ into a simplicial complex ($CW$-complex) of dimension
$d\geq n+1$. Let $X^{(k)}$ be the associated $k$-skeleton. A
finite filtration
\begin{equation}\label{eqn:adm-filtration}
\emptyset = X_{-1}\subset X_0\subset X_1\subset \ldots \subset
X_{n-1}\subset X_n \subset X_{n+1} \subset \ldots \subset X_d = X
\end{equation}
is called {\em admissible} if the following condition is
satisfied,
\begin{enumerate}
 \item $X_k$ is a $G$-invariant subcomplex of $X^{(k)}$ for each $k$.
\end{enumerate}
Let $C_\ast(X)$ be the $G$-chain complex associated to the
filtration (\ref{eqn:adm-filtration}) where $C_k(X):=
H_k(X_{k},X_{k-1}; \mathbb{Z})$.  The associated cohomology groups
with coefficients in a $\mathbb{Z}[G]$-module $M$ are sometimes
referred to as {\em special equivariant cohomology groups} and
denoted by
\begin{equation}\label{eqn:spec-cohomology}
\mathcal{H}^\ast_G(X; M).
\end{equation}
An admissible filtration (\ref{eqn:adm-filtration}) is said to be
{\em free in dimension} $k$ if,
\begin{enumerate}
 \item[(2)] $C_k = C_k(X):= H_k(X_{k},X_{k-1}; \mathbb{Z})$ is a free
 $\Lambda$-module, where $\Lambda := \mathbb{Z}[G]$
 is the group ring of the group $G$.
\end{enumerate}
\end{defn}

\begin{rem}\label{rem:caveat}
The reader should keep in mind that the cohomology groups
(\ref{eqn:spec-cohomology}) are functors of a filtered space $X$,
not the space alone. This is in agreement with the approach in
\cite[page 112]{tDieck87} where the corresponding equivariant
cohomology groups depend on a given $G$-$CW$-structure.
\end{rem}

\begin{rem}\label{rem:free}
The structure of a simplicial complex on a $G$-space $X$ plays an
auxiliary role and in intended applications one starts directly
with a filtration (\ref{eqn:adm-filtration}), tacitly assuming
that it can be ``triangulated''. The most natural is the situation
when $X$ is a $G$-manifold complex where the constituent manifolds
are semialgebraic sets (as in Section~\ref{sec:standard}) and this
condition is automatically satisfied. The condition (2) is
evidently not necessarily satisfied if $X$ is a free $G$-space
with an admissible filtration.
\end{rem}

We assume that the target $G$-space $Y$ is also filtered by a
filtration
\begin{equation}\label{eqn:cw-filtration}
\emptyset = Y_{-1}\subset Y_0\subset Y_1\subset \ldots \subset Y_n
\subset Y_{n+1} \subset \ldots \subset Y_\nu = Y
\end{equation}
arising from some, not necessarily free, $G$-$CW$-structure on
$Y$. Let $D_\ast = (\{D_k\}_{0}^{\nu},\partial)$ be the associated
cellular chain complex, $D_k := H_k(Y_k, Y_{k-1})$.

\medskip
The following proposition allows us to reduce the problem of the
existence of $G$-equivariant maps between $X$ and $Y$, to the
question of the existence of chain maps between the associated
chain complexes of $\Lambda = \mathbb{Z}[G]$ modules.

\begin{prop}\label{prop:fundam-obstruction}
Suppose that $X$ is a $d$-dimensional $G$-space with an {\em
admissible filtration} (\ref{eqn:adm-filtration})
(Definition~\ref{defn:adm-filtration}). Suppose that $Y$ is a
$G$-$CW$-complex and let (\ref{eqn:cw-filtration}) be its
associated filtration by skeletons. Then if there exists a
$G$-equivariant map $f : X\rightarrow Y$, there exists also a
chain map $f_\ast : C_\ast (X) \rightarrow D_\ast(Y)$ of the
associated, augmented chain complexes,
\begin{equation}\label{eqn:ladder}
\xymatrix@=20pt{
 ..\ar[r] & C_{n+1} \ar[r]^{\partial} \ar[d]^{f_{n+1}} &
C_n \ar[r]^-{\partial}\ar[d]^{f_n} & C_{n-1}
\ar[r]\ar[d]^{f_{n-1}}
 & \ldots \ar[r] & C_1 \ar[r]^{\partial}\ar[d]^{f_1} & C_0 \ar[r]\ar[d]^{f_0} & \mathbb{Z}\ar[d]^{\cong}\ar[r] & 0\\
 ..\ar[r] & D_{n+1} \ar@{>}[r]_{\partial} & D_n \ar[r]_{\partial} & D_{n-1} \ar[r]
 & \ldots \ar[r] & D_1 \ar[r]_{\partial} & D_0 \ar[r]_{\partial} & \mathbb{Z}\ar[r] & 0
 }
\end{equation}
\end{prop}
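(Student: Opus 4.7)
The plan is to reduce the statement to the equivariant cellular approximation theorem (available in \cite[Chapter~II]{tDieck87}) applied to the $G$-invariant triangulation of $X$ postulated in Definition~\ref{defn:adm-filtration}, and then to extract the chain map by passing to relative homology in each filtration degree. Nothing more delicate than naturality of the long exact sequence of a triple should be required once the map has been made filtration-preserving.

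Concretely, I would first invoke the $G$-equivariant cellular approximation theorem to replace $f\colon X\to Y$ by a $G$-homotopic map $\tilde f\colon X\to Y$ satisfying $\tilde f(X^{(k)})\subseteq Y_k$ for every $k$, where $X^{(k)}$ denotes the $k$-skeleton of the ambient $G$-invariant triangulation of $X$. By condition (1) of Definition~\ref{defn:adm-filtration} the admissible filtration satisfies $X_k\subseteq X^{(k)}$, hence $\tilde f(X_k)\subseteq Y_k$ for all $k$, i.e.\ $\tilde f$ is a map of filtered $G$-spaces. Because $\tilde f$ is $G$-equivariant, its restriction to each pair $(X_k,X_{k-1})$ and the induced map
\[
 f_k \;:=\; \tilde f_{\ast}\colon H_k(X_k,X_{k-1};\mathbb{Z}) \longrightarrow H_k(Y_k,Y_{k-1};\mathbb{Z}), \qquad f_k\colon C_k(X)\to D_k(Y),
\]
is automatically a homomorphism of $\Lambda$-modules.

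Next I would verify that $\{f_k\}$ is a chain map, i.e.\ that the ladder in (\ref{eqn:ladder}) commutes. This is pure naturality: the connecting homomorphisms in the long exact sequences of the triples $(X_k,X_{k-1},X_{k-2})$ and $(Y_k,Y_{k-1},Y_{k-2})$ are natural with respect to maps of filtered pairs, and $\tilde f$ is such a map. The augmentation square commutes because $\tilde f_0$ sends each $0$-cell of $X$ to a $0$-cell of $Y$, so counting coefficients is preserved. Finally I would note that the construction depends only on the $G$-homotopy class of $f$, which is why replacing $f$ by $\tilde f$ is harmless at the level of chain maps (different choices of cellular approximation yield $G$-chain-homotopic chain maps, a point worth recording for later use in obstruction-theoretic arguments).

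The main technical point, and thus the main obstacle, is the legitimacy of $G$-equivariant cellular approximation in this generality: the action on $Y$ is not assumed free, and the triangulation on $X$ is only postulated to exist (Remark~\ref{rem:free}) rather than explicitly constructed. The version of the cellular approximation theorem in \cite[II.2]{tDieck87} is stated for $G$-CW maps and so applies once the $G$-invariant triangulation on $X$ is fixed, provided the filtration subcomplexes $X_k$ are themselves $G$-CW-subcomplexes of $X^{(k)}$; this is exactly condition (1) of Definition~\ref{defn:adm-filtration}. In the geometric situations of interest (semialgebraic $G$-manifold complexes, as in Section~\ref{sec:standard}), the existence of a compatible $G$-invariant triangulation with the required subcomplex property is standard, so the hypothesis is non-vacuous and the argument goes through.
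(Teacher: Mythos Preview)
Your proposal is correct and follows exactly the same route as the paper's proof: apply the equivariant cellular approximation theorem from \cite[II.2]{tDieck87} to obtain a $G$-homotopic cellular map $g$, observe that $g(X_k)\subset g(X^{(k)})\subset Y^{(k)}=Y_k$ by condition~(1) of Definition~\ref{defn:adm-filtration}, and read off the chain map on relative homology. The paper states this in three lines, whereas you spell out the naturality argument for the boundary squares, the augmentation, and the chain-homotopy independence---all of which are left implicit in the paper but are exactly the details one would fill in.
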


\begin{proof} If there exists a $G$-equivariant map $f : X\rightarrow
Y$ then by the {\em cellular approximation theorem} (Theorem~2.1
in \cite[Section~II.2]{tDieck87}) there exists a cellular map $g :
X\rightarrow Y$ which is $G$-homotopic to $f$. In follows that
$g(X_{k})\subset g(X^{(k)})\subset Y^{(k)}=Y_k$ which in turn
implies the existence of the chain map (\ref{eqn:ladder}).
\end{proof}

\subsection{Algebraic description of the obstruction}
\label{subsec:alg-description}

Proposition~\ref{prop:fundam-obstruction} allows us to reduce the
topological problem of the existence of equivariant maps to an
algebraic problem of finding a chain map. This in turn leads to
algebraic counterparts of Theorem~\ref{thm:obstruction-fund} and
Corollary~\ref{cor-fund}.

\begin{prop}\label{prop:chain-obstruction}
Suppose that $C_\ast :=\{C_k\}_{k=-1}^{d}$ and $D_\ast
:=\{D_k\}_{k=-1}^{d}$ are finite chain complexes of $\Lambda =
\mathbb{Z}[G]$ modules where $C_{-1}\cong D_{-1}\cong \mathbb{Z}$.
Suppose that the chain map $F_{n-1}:=(f_j)_{j=-1}^{n-1} :
\{C_k\}_{k=-1}^{n-1}\rightarrow \{D_k\}_{k=-1}^{n-1}$ exists and
is fixed in advance ($n+1\leq d$). Suppose that $F_{n-1}$ can be
extended one step further, i.e.\ that there exists a homomorphism
$ f_n : C_n\rightarrow D_n$ such that $\partial\circ  f_n =
f_{n-1}\circ\partial$. Then the obstruction to the existence of a
chain map (\ref{eqn:ladder}), $F_{n+1}:=(f_j)_{j=-1}^{n+1} :
\{C_k\}_{k=-1}^{n+1}\rightarrow \{D_k\}_{k=-1}^{n+1}$, extending
the chain map $F_{n-1}$ (with the modification of $f_n$ if
necessary) is a well defined element $\theta$ of the cohomology
group
\begin{equation}\label{eqn:alg-cohom-gp}
H^{n+1}(C_\ast; H_n(D_\ast)) = H_{n+1}(\mbox{\rm Hom}(C_\ast,
H_n(D_\ast))).
\end{equation}
Moreover, $\theta$ is represented by the cocycle
\begin{equation}\label{eqn:cocycle-formula}\theta( f_n) :
C_{n+1}\stackrel{\partial}{\longrightarrow} C_n \stackrel{
f_n}{\longrightarrow} Z_n(D_\ast) \stackrel{\pi}{ \longrightarrow}
H_n(D_\ast).\end{equation} The vanishing of $\theta$ is not only
necessary but also sufficient for the existence of the chain map
$F_{n+1}$ (\ref{eqn:ladder}) if  $C_n$ and $C_{n+1}$ are free (or
projective)  modules.
\end{prop}

\begin{proof} The homomorphism $f_n\partial  : C_{n+1}\rightarrow D_{n}$
has the following properties:

\begin{enumerate}
 \item[(1)] ${\rm Image}(f_n\partial) \subset
 Z_n({D_\ast}) := {\rm Ker}(D_n\stackrel{\partial}\rightarrow
 D_{n-1})$.
 \item[(2)] ${\rm Image}(f_n\partial ) \subset
 B_n({D_\ast}):= {\rm Image}(D_{n+1}\stackrel{\partial}\rightarrow
 D_{n})$  if and only if there exists $f_{n+1}$ such that
 $\partial\circ f_{n+1}= f_n\circ\partial$
(provided $C_{n+1}$ is a projective module).
\end{enumerate}
In other words if $\theta(f_n)=\pi\circ f_n\circ\partial$ is the
homomorphism
$$\theta(f_n) :
C_{n+1}\stackrel{\partial}{\longrightarrow} C_n
\stackrel{f_n}{\longrightarrow} Z_n(D_\ast) \stackrel{\pi}{
\longrightarrow} H_n(D_\ast)$$ then $f_{n+1}$ exists if and only
if $\theta(f_n) = 0$.

\begin{enumerate}
\item[(3)] $\theta(f_n) \in Hom(C_\ast, H_n(D_\ast))$ is a
cocycle.
\end{enumerate}
If $\theta := [\theta(f_n)]$ is the associated cohomology class
let us show that it has the properties claimed in the proposition.

Suppose that the chain map $F_{n+1}$ exists where $f_n$ is
replaced by a homomorphism $f_n' : C_n\rightarrow D_n$ such that
$f_{n+1}\circ\partial = \partial\circ f_n'$ and
$f_{n}'\circ\partial = \partial\circ f_{n-1}$ (in particular
$\theta(f_n')=0$). Let $h_1 = f_n-f_n' : C_n\rightarrow D_n$.
Since $\partial\circ f_n = \partial\circ f_n'$ we observe that
${\rm Image}(h_1)\subset Z_n(D_\ast)$. Let $h = \pi\circ h_1 :
C_n\rightarrow H_n(D_\ast)$. It follows that
$$
\theta(f_n)=\theta(f_n)-\theta(f_n') = \theta(h_1) = \pi\circ
h_1\circ\partial = h\circ\partial = \delta(h)
$$
which implies that $[\theta(f_n)]=0\in H^{n+1}(C_\ast ;
H_n(D_\ast))$.

\medskip
Conversely if $\theta = 0$ then $\theta(f_n)$ is a coboundary,
i.e.\ $\theta(f_n) = \delta(h) = h\circ\partial$ for some
homomorphism $h : C_n \rightarrow H_n(D_\ast)$. Since the module
$C_n$ is projective, there exists a homomorphism $h_1 : C_n
\rightarrow Z_n(D_\ast)$ such that $h = \pi \circ h_1$, hence
\begin{equation}\label{eqn:imamo-relaciju}
\theta(f_n) = \pi\circ f_n \circ \partial = \pi\circ h_1 \circ
\partial : C_{n+1} \rightarrow H_n(D_\ast).
\end{equation}
Let $f_n' := f_n - h_1$. Since ${\rm Image}(h_1)\subset
Z_n(D_\ast)$ we observe that $\partial\circ f_n' = \partial\circ
f_n$ which implies that $\partial\circ f_n' =
f_{n-1}\circ\partial$.

From the equality $\theta(f_n')=\pi\circ f_n'\circ
\partial =0$ we deduce that ${\rm
Image}(f_n'\circ\partial)\subset B_n(D_\ast)$. Since $C_{n+1}$ is
a projective module, there exists a homomorphism $f_{n+1}:
C_{n+1}\rightarrow D_{n+1}$ such that $\partial\circ f_{n+1} =
f_n'\circ\partial$. This completes the construction of the chain
map $F_{n+1}$.
\end{proof}

\begin{rem}\label{rem:oznaka-cohom-gp}
If the chain complex $C_\ast = C_\ast(X)$ arises from a fixed
admissible filtration (\ref{eqn:adm-filtration})
(Definition~\ref{defn:adm-filtration}) then the group
$H^{n+1}(C_\ast; M)$ (where $M = H_n(D_\ast)$) is nothing but the
(special) equivariant cohomology group $\mathcal{H}^{n+1}_G(X; M)$
which appears in line (\ref{eqn:spec-cohomology}).
\end{rem}

\begin{exmp}\label{ex:4-torzija} The first obstruction $\theta$ to the
existence of a chain map between the following chain complexes
lies in the group $H^{2}(C_\ast; H_1(D_\ast))\cong \mathbb{Z}/4$.
Assuming
\begin{equation}\label{eqn:4-torzija}
\xymatrix@=20pt{
 \mathbb{Z}[\mathbb{Z}/2] \ar[r]^{1-\omega} \ar[d]^{f_3} & \mathbb{Z}[\mathbb{Z}/2]
 \ar[r]^{2(1+\omega)} \ar[d]^{f_2} & \mathbb{Z}[\mathbb{Z}/2]
 \ar[r]^{1-\omega} \ar[d]^{f_1} & \mathbb{Z}[\mathbb{Z}/2]
 \ar[r] \ar[d]^{f_0} & \mathbb{Z} \ar[r] \ar[d]^{\cong} &  0 \\
0 \ar[r] &   0 \ar[r] & \mathbb{Z}[\mathbb{Z}/2] \ar[r]_{1-\omega}
& \mathbb{Z}[\mathbb{Z}/2] \ar[r] & \mathbb{Z} \ar[r] & 0
 }
\end{equation}
that both $f_0$ and $f_1$ are the identity maps a simple
calculation shows that $\theta = 2$.
\end{exmp}

\begin{rem}\label{rem:Lense_space}
The cyclic group $\mathbb{Z}/4=\{1,\omega,\omega^2,\omega^3 \}$
acts on the sphere $S^3 = S^1\ast S^1\subset \mathbb{C}^2$ by
rotating each of the circles $S^1$ through the angle of $90^\circ$
with the Lens space $L(4,1)=S^3/(\mathbb{Z}/4)$ as the quotient.
The partial quotient $S^3/H$ where $H=\{1, \omega^2\}$ is the
projective space $\mathbb{R}P^3$ which inherits a $\mathbb{Z}/2$
action form the group $(\mathbb{Z}/4)/H$. It is not difficult to
see that the first line of (\ref{eqn:4-torzija}) is an associated
chain complex obtained as a quotient from the standard
$\mathbb{Z}/4$-invariant $CW$-structure on $S^3$. In light of
Proposition~\ref{prop:fundam-obstruction} the non-triviality of
the obstruction $\theta$, as calculated in
Example~\ref{ex:4-torzija}, guarantees that there does not exist a
$\mathbb{Z}/2$-equivariant map $f :
(\mathbb{R}P^3)^{(2)}\rightarrow S^1$, where $S^1$ has the
antipodal action and $(\mathbb{R}P^3)^{(2)}$ is the $2$-skeleton
of $\mathbb{R}P^3$.
\end{rem}

\begin{exmp}\label{ex:non-example}
Suppose we want to show that there does not exist a
$\mathbb{Z}/2$-equivariant map $f : S^{n+1}\rightarrow S^{n}$
(Borsuk-Ulam theorem) by the methods of this paper. We initially
choose an admissible filtration $\{F_k\}_{k=0}^{n+1}$ of $S^{n+1}$
(in the sense of Definition~\ref{defn:adm-filtration}) by defining
$F_0=\ldots=F_{n-1}=S^0, F_n=S^n$ and $F_{n+1}=S^{n+1}$, where
$S^0\subset S^n$ are $\mathbb{Z}/2$-invariant subspheres of
$S^{n+1}$. Then $C_n \cong H_n(S^n, S^0; \mathbb{Z})\cong
\mathbb{Z}$ and easy calculation shows that the associated
obstruction $\theta$ is $0$, meaning that this filtration is not
well adopted for this problem. If we modify the filtration by
choosing $F_{n-1}$ to be a $\mathbb{Z}/2$-invariant sphere
$F_{n-1}=S^{n-1}$ (where $S^0\subset S^{n-1}\subset S^n$) then a
direct calculations shows that $\theta\neq 0$.
\end{exmp}

\begin{rem}
The last part of Proposition~\ref{prop:chain-obstruction},
claiming that $\theta$ is a complete obstruction provided $C_n$
and $C_{n+1}$ are free modules, is a motivation for isolating
admissible filtrations free in selected dimensions, cf.\ condition
(2) in Definition~\ref{defn:adm-filtration}. Note that the
freeness of $C_n$ and $C_{n+1}$ is a condition that can be
satisfied even if $X$ is not a free $G$-space, e.g.\ if the
corresponding set of fixed points $X^H$ has a codimension $\geq 2$
for each subgroup $H\neq \{e\}$.
\end{rem}

\subsection{Heuristics for evaluating the obstruction $\theta$}
\label{subsec:heuristic}

In many cases the chain map $F_{n-1}=(f_j)_{j=-1}^{n-1}$, which in
Proposition~\ref{prop:chain-obstruction} serves as an input for
calculating the obstruction $\theta$, is unique up to a chain
homotopy. This happens for example when $D_\ast$ is a chain
complex associated to a $G$-sphere $Y$ of dimension $n$.

In this case one can inductively build a `ladder of maps'
(\ref{eqn:ladder}) in order to find a representative of the chain
homotopy class of the chain map $[F_{n-1}]$ needed for the
evaluation of $\theta$. A very instructive explicit example of
this calculation can be found in \cite{Kriz}.

In practise one can bypass these calculations by constructing and
using instead an arbitrary $G$-equivariant map $\phi_n : X_n
\rightarrow Y$. In the case of $G$-manifold complexes
(Section~\ref{sec:methods}) the group $C_n = H_n(X_n, X_{n-1};
\mathbb{Z})$ is generated by the (relative) fundamental classes of
manifolds with boundary and the calculation of the map $\pi \circ
f_n : C_n \rightarrow H_n(D_\ast) \cong H_n(Y) \cong H_n(S^n)$ in
(\ref{eqn:cocycle-formula}) is reduced to the calculation of the
corresponding mapping degrees.

Examples of calculations which essentially follow this procedure
can be found in \cite[Section~4]{Guide2}.

\subsection{Computational topology and effective obstruction theory}
\label{sec:app-comput-top}

The problem of  calculating topological obstructions to the
existence of equivariant maps is identified in \cite{BVZ} as one
of the questions of great relevance for computational topology.
The focus is naturally on those features of the obstruction
problem where topology and computational mathematics interact in
an essential way. This brings forward algorithmic aspects of the
question emphasizing explicit procedures suitable for
semiautomatic and/or large scale calculations. Here we
recapitulate and briefly summarize some of the general ideas used
in the proof of Theorem~\ref{thm:main-2} which may be of some
independent interest in the development of the effective
obstruction theory.

\begin{enumerate}
\item[(1)] One works with manifold $G$-complexes
(Section~\ref{sec:methods}) which are more general and often more
economical than $G$-$CW$-complexes.

\item[(2)] Given a $G$-space (manifold) $X$, the associated
$G$-manifold complex arises through the iteration of the
`fundamental domain - geometric boundary' procedure (see the
diagram in Section~\ref{sec:standard}).

\item[(3)] The fact that the generators of $G$-modules are
fundamental classes (Section~\ref{subsec:heuristic}) allows us to
evaluate boundaries and chain maps as the mapping degrees
(Section~\ref{sec:fragment}, \ref{sec:evaluation} and
\ref{sec:obstr-evaluation}).

\item[(4)] The emphasis in the basic set-up of the obstruction
theory (Appendix, Section~\ref{sec:appendix}) is on `admissible
filtrations' (Proposition~\ref{prop:fundam-obstruction}) and chain
complexes, rather than spaces,
(Proposition~\ref{prop:chain-obstruction}).
\end{enumerate}

\subsection*{Acknowledgments}  It is a pleasure to thank P.~Landweber,
S.~Melikhov, and members of the Belgrade
CGTA\footnote{Combinatorics in Geometry, Topology, and Algebra.}
seminar for useful comments on a preliminary version of this paper
(April 10, 2011).

\end{document}